    \newtheorem{theorem}{Theorem}[section]
    \newtheorem{corollary}[theorem]{Corollary}
    \newtheorem{definition}[theorem]{Definition}
    \newtheorem{lemma}[theorem]{Lemma}
    \newtheorem{proposition}[theorem]{Proposition}
    \newtheorem{remark}[theorem]{Remark}
    \def\cocoa{{\hbox{\rm C\kern-.13em o\kern-.07em C\kern-.13em o\kern-.15em A}}}
    \def\mz{{\mathbb{Z}}}
    \def\mr{{\mathbb{R}}}
    \def\mb{{\mathbb{B}}}
    \def \bx{{\pmb x}}
    \def\b0{{\pmb 0}}
    \def\rma{{\mathbb{T}}}
    \def\zma{\mathbb{Z}_{max}}
    \def\smf{A} 
    \def\smfn{\smf[x_1, \dots, x_n]} 
    \def\smfnL{\smf(x_1, \dots, x_n)} 
    \def\fracr{\mbox{Frac}(R)}
    \def\fraca{\mbox{Frac}(A)}
    \def\fracap{\mbox{Frac}(A')}
    \def\otherc{\mathfrak{C}}
	\def\dsim{\diamond}
    \def\diag{{\Delta}}
    \def\<{\langle}
    \def\>{\rangle}
\begin{document}

    \title{On the dimension of polynomial semirings}
    
    \author{D\'aniel Jo\'o}
    \address{R\'enyi Institute of Mathematics, Hungarian Academy of Sciences, Budapest, Hungary}
    \email{joo.daniel@renyi.mta.hu}
    \thanks{The first author is partially supported by OTKA K101515 and the exchange program between the Bulgarian and Hungarian Academies of Sciences.}

    \author{Kalina Mincheva}
    \address{Department of Mathematics, Johns Hopkins University, Baltimore, MD 21218}
    \email{mincheva@math.jhu.edu}

\subjclass[2010]{16Y60 (Primary); 12K10 (Primary); 14T05 (Secondary); 06F05  (Secondary)}

\keywords{idempotent semirings, idempotent semifields, polynomial semirings, Krull dimension}

\begin{abstract}
In our previous work, motivated by the study of tropical polynomials, a definition for prime congruences was given for an arbitrary commutative semiring. It was shown that for additively idempotent semirings this class exhibits some analogous properties to prime ideals in ring theory. The current paper focuses on the resulting notion of Krull dimension, which is defined as the length of the longest chain of prime congruences. Our main result states that for any additively idempotent semiring $A$, the semiring of polynomials $A[x]$ and the semiring of Laurent polynomials $A(x)$, we have $\dim A[x] = \dim A(x) = \dim A + 1$.

\end{abstract}

\maketitle

\section{Introduction}

The current work studies the Krull dimension of additively idempotent semirings defined in terms of congruences. One motivation to study additively idempotent semirings is provided by tropical geometry. There, from an algebraic point of view, one is interested in the properties of polynomial rings over the tropical max-plus semifield $\rma = \mr_{max}$. Two other additively idempotent semifields are central to the development of characteristic 1 geometry \cite{CC13}. These are the semifield of integers $\zma \subset \rma$ and the two element additively idempotent semifield $\mb$.\par\smallskip
In ring theory congruences, or equivalently homomorphisms, are determined by the ideal that is their kernel (i.e. the equivalence class of the $0$ element). From this perspective semirings behave quite differently: in general the kernel of a congruence contains very little information about which elements are identified. In fact one can easily find examples of commutative semirings with a complicated lattice of congruences all of which have trivial kernels. With this consideration in mind, in \cite{JM14} the notion of primeness is extended to the congruences of a general commutative semiring, and the class of prime congruences is described in the polynomial (and Laurent polynomial) semirings over the semifields $\mb$, $\rma$ and $\zma$. The key application of this theory in \cite{JM14} is to prove a tropical Nullstellensatz. However, it is also observed that the prime congruences yield a notion of Krull dimension which behaves intuitively in the sense that an $n$ variable polynomial ring over $\rma,\zma$ or $\mb$ always has dimension $n$ larger than that of its ground semifield. The aim of the current paper is to generalize this result for the polynomial and Laurent polynomial semirings over arbitrary additively idempotent commutative semirings, which we will refer to as $\mb$-algebras.\par\smallskip
A different approach to establish a tropical notion of dimension using chains of congruences was taken by L. Rowen and T. Perri in \cite{PR15}, which we will briefly recall in Remark \ref{rem:perri-rowen}.  A common theme in \cite{PR15} and the present work is that to obtain a good notion of dimension one has to bypass the difficulties that come from polynomial semirings having too many congruences. In fact, as we will see in Proposition \ref{prop:infascqc}, any additively idempotent polynomial semiring in at least $2$ variables has infinite chains of congruences with cancellative quotients.\par\smallskip
The main result of the current work is Theorem \ref{thm:dplusone}, which concerns the polynomial semiring $A[x]$ and the Laurent polynomial semiring $A(x)$ over an arbitrary additively idempotent commutative semiring $A$. 
\begin{theorem}
Let $A$ be a $\mb$-algebra with $\dim A < \infty$. Then we have that $\dim A(x) = \dim A[x] = \dim A + 1$.
\end{theorem}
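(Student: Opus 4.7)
The plan is to prove $\dim A[x] \geq \dim A + 1$ and $\dim A[x] \leq \dim A + 1$ separately, and to observe that the same arguments go through with $A(x)$ in place of $A[x]$.

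For the lower bound, given a chain $P_0 \subsetneq \cdots \subsetneq P_d$ of prime congruences of $A$, I would lift each $P_i$ to the ``coefficient-wise'' congruence $\widetilde{P_i}$ on $A[x]$, defined by identifying two polynomials iff their coefficients are $P_i$-equivalent degree by degree. This has quotient $(A/P_i)[x]$, so the key point to verify is that $(A/P_i)[x]$ has prime diagonal, which should follow from $A/P_i$ being cancellative (a feature of prime-congruence quotients of $\mathbb{B}$-algebras) together with the fact that polynomial extensions preserve cancellativity in the additively idempotent setting. The chain $\widetilde{P_0} \subsetneq \cdots \subsetneq \widetilde{P_d}$ has distinct members since it restricts to the original chain on $A$. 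Adjoining the additional relation $x \sim 1$ above $\widetilde{P_d}$ yields a proper strict refinement whose quotient is isomorphic to $A/P_d$ and is therefore prime, producing a chain of length $d + 1$.

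For the upper bound I would analyze the contraction map $\pi \colon \mathrm{Spec}\, A[x] \to \mathrm{Spec}\, A$ sending a prime congruence $Q$ to its restriction $Q \cap (A \times A)$, after checking that this restriction is again prime. The central lemma to establish is the semiring analogue of the classical fact that $\pi$ has fibers of height at most one: if $Q_0 \subsetneq Q_1 \subsetneq Q_2$ are prime congruences on $A[x]$ all restricting to the same prime $P$ of $A$, then a contradiction arises. Granted this, a chain of length $n$ in $A[x]$ projects (after collapsing repetitions in each fiber) to a chain of length at least $n - 1$ in $A$, giving $\dim A[x] \leq \dim A + 1$.

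The fiber lemma is the heart of the argument and the main technical obstacle. My strategy is to reduce first modulo $P$ so that $A$ becomes cancellative with trivial diagonal prime, and then to invert the nonzero elements of $A$ to pass to the idempotent semifield of fractions $K = \mathrm{Frac}(A)$, checking that this localization respects the relevant chain structure on congruences of the polynomial semiring. The problem then becomes: classify the prime congruences of $K[x]$ restricting to the diagonal of $K$ and show that they form a poset of height at most one. This will require generalizing the structural description of prime congruences from \cite{JM14}, which was carried out over the specific idempotent semifields $\mathbb{B}$, $\mathbb{T}$ and $\mathbb{Z}_{\max}$, to an arbitrary additively idempotent semifield; overcoming the absence of Noetherianity (cf.\ the infinite chains mentioned in Proposition \ref{prop:infascqc}) and handling the localization-to-\emph{semifield} step are precisely where the real work lies. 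The Laurent case $A(x)$ then runs in parallel once the same fiber lemma is established for the contraction $\mathrm{Spec}\, A(x) \to \mathrm{Spec}\, A$, using that formally inverting $x$ does not change how congruences restrict to $A$.
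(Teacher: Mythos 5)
There are two genuine gaps, one in each direction of your argument.

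\textbf{Lower bound.} Your coefficient-wise lift $\widetilde{P_i}$ is not a prime congruence of $A[x]$. By Proposition \ref{prop:primeorder} a $\mb$-algebra is a domain if and only if it is cancellative \emph{and totally ordered}, and $(A/P_i)[x]$ is never totally ordered (already in $\mb[x]$ the element $1+x$ equals neither $1$ nor $x$). Cancellativity of the quotient is exactly the weaker property QC, and the paper is emphatic (Remark \ref{rem:primeandQC}, Proposition \ref{prop:infascqc}) that QC congruences are far more plentiful than primes --- this is precisely the trap your lift falls into. The fix is the one used in Proposition \ref{prop: var_inc_dim}: impose a monomial order (degree in $x$ first, then the coefficient order of the domain $A/P_i$) and take the congruence identifying each polynomial with its leading monomial; that quotient is totally ordered and cancellative, hence a domain, and composing with evaluation $x\mapsto 1$ produces the extra prime.

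\textbf{Upper bound.} Your reduction from the fiber lemma to $\dim A[x]\le\dim A+1$ is false as a matter of poset combinatorics. Knowing only that each fiber of $Q\mapsto Q|_A$ has height at most one permits a chain $Q_0\subsetneq Q_1\subsetneq Q_2\subsetneq Q_3$ whose restriction stabilizes once over $P$ and once again over a larger $P'$, projecting a chain of length $3$ onto a chain of length $1$. This is exactly what happens classically: the fiber of $\mathrm{Spec}\,R[x]\to\mathrm{Spec}\,R$ over $\mathfrak p$ is $\mathrm{Spec}\,\kappa(\mathfrak p)[x]$ and has dimension one for \emph{every} ring $R$, yet Seidenberg's examples give $\dim R[x]=2\dim R+1$. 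What the theorem actually requires is the stronger global statement of Lemma \ref{lem:fourprimes} / Proposition \ref{prop:laurentdim}(i): along an entire chain of primes of $A[x]$ \emph{with constant kernel}, the restriction to $A$ can stabilize at most once in total, not once per fiber. The constant-kernel hypothesis is not cosmetic --- it is what the elementary monomial computation in Lemma \ref{lem:fourprimes} uses, and removing it costs a second nontrivial ingredient, Proposition \ref{prop:trivkerchain}, which says a domain has the same dimension as its semifield of fractions and so lets one assume all $\ker(P_i)$ coincide. Your localization step also glosses over this point: extending a congruence of $A[x]$ to $\mathrm{Frac}(A)[x]$ is only proper when the relevant kernel is trivial (cf.\ Proposition \ref{prop:fracrcongs}(i)), and a prime restricting to $\Delta$ on $A$ may still contain $(x,0)$. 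Finally, the classification of primes of $K[x]$ over an arbitrary idempotent semifield $K$ that you defer to is much heavier machinery than the paper needs; Lemma \ref{lem:fourprimes} avoids any classification by a direct twisted-product calculation with four primes.
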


When comparing to the classical ring theoretic setting, it is somewhat surprising that Theorem \ref{thm:dplusone} holds without any restriction on $A$. 
In commutative ring theory, for any Noetherian ring $R$ $\dim R[x] = \dim R + 1$ (see for example \cite{Ei95}). However, when the Noetherian restriction is dropped $\dim R[x]$ can be any integer between $\dim R + 1$ and $2\dim R + 1$ (see \cite{Se54}).
\par\smallskip
The proof of Theorem \ref{thm:dplusone} relies on two key facts. The first one is Proposition \ref{prop:trivkerchain}. There we establish that the dimension of a domain (a $\mb$-algebra whose trivial congruence is prime) always equals the dimension of its semifield of fractions. This counter-intuitive fact can be explained by realizing that semifields are not the simple objects amongst semirings, but come with the distinguished property that the kernel of every congruence is trivial. One can then go on to show that the dimension of domains can be always computed by considering primes with trivial kernel. \par 
The second key observation is (i) of Proposition \ref{prop:laurentdim}:

\begin{proposition}
If ${\mathfrak{p}_1} \subset {\mathfrak{p}_2} \subset \dots$ is a chain of primes in $\smf(x)$ or $\smf[x]$ such that the kernel of every $\mathfrak{p}_i$ is the same, then after restricting the chain to $A$, in ${\mathfrak{p}_1}|_{\smf} \subseteq {\mathfrak{p}_2}|_{\smf}\dots$ equality occurs at most once.
\end{proposition}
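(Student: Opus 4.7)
I will argue by contradiction. Suppose the restricted chain contains two distinct equalities $\mathfrak{p}_i|_{A} = \mathfrak{p}_{i+1}|_{A}$ and $\mathfrak{p}_j|_{A} = \mathfrak{p}_{j+1}|_{A}$ with $i < j$. The key case is $j = i+1$, where we have three strictly increasing primes $\mathfrak{q}_1 \subsetneq \mathfrak{q}_2 \subsetneq \mathfrak{q}_3$ (namely $\mathfrak{p}_i, \mathfrak{p}_{i+1}, \mathfrak{p}_{i+2}$) sharing a common kernel $K$ and a common restriction $\mathfrak{c}$ to $A$. The general case $j > i+1$ reduces to this one by first quotienting $A[x]$ by $\mathfrak{p}_{i+1}$, which produces an analogous three-prime configuration in the quotient.

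In this configuration, the quotient $B := A[x]/\mathfrak{q}_1$ (respectively $A(x)/\mathfrak{q}_1$) is a domain. The image $A'$ of $A$ in $B$ is isomorphic to $A/\mathfrak{c}$, and the images $\bar{\mathfrak{q}_2} \subsetneq \bar{\mathfrak{q}_3}$ form a strictly increasing chain of two nontrivial primes in $B$. The common-kernel assumption forces both $\bar{\mathfrak{q}_2}$ and $\bar{\mathfrak{q}_3}$ to have trivial kernel in $B$, and the common-restriction assumption forces both to restrict trivially to $A'$. Proposition~\ref{prop:trivkerchain} now applies: its inclusion-preserving bijection between primes of $B$ with trivial kernel and primes of the fraction semifield $F := \mathrm{Frac}(B)$ lifts the chain to a strictly increasing chain $\tilde{\mathfrak{q}_2} \subsetneq \tilde{\mathfrak{q}_3}$ of nontrivial primes in $F$. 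Applying the same proposition to $A'$, both lifted primes still restrict trivially to the sub-semifield $E := \mathrm{Frac}(A') \subseteq F$.

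The final step, which I expect to be the main obstacle, is to derive a contradiction in the following clean form: in the semifield $F$, which is generated over $E$ by the image $\bar{x}$ of $x$ (together with $\bar{x}^{-1}$ in the Laurent case), there is no strictly increasing chain of two nontrivial primes with trivial restriction to $E$. The plan is to pick witnesses $f \sim g$ in $\tilde{\mathfrak{q}_2}$ and $f' \sim g'$ in $\tilde{\mathfrak{q}_3} \setminus \tilde{\mathfrak{q}_2}$, expressed as (Laurent) polynomials in $\bar{x}$ with coefficients in $E$, and then exploit the prime property of $\tilde{\mathfrak{q}_3}$ to combine them into a nontrivial identification between elements of $E$, contradicting the triviality of $\tilde{\mathfrak{q}_3}|_{E}$. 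The delicate part is the algebraic manipulation that produces a nontrivial $E$-relation from a pair of nontrivial $\bar{x}$-relations; this will likely require careful bookkeeping of leading terms (or Newton polytopes in the Laurent case) of the two witnessing polynomial relations.
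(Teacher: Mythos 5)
There are two genuine gaps. First, the reduction of the general case to your ``three primes with a common restriction'' configuration does not work. When $j>i+1$ the two equality events occur at different levels of the restricted chain: you have $\mathfrak{p}_i|_A=\mathfrak{p}_{i+1}|_A\subseteq\mathfrak{p}_j|_A=\mathfrak{p}_{j+1}|_A$ and the middle containment may be strict. Quotienting by $\mathfrak{p}_{i+1}$ then leaves two primes whose common restriction to the image of $A$ is \emph{nontrivial}, so you do not obtain three primes sharing a single restriction $\mathfrak{c}$. The paper's Lemma \ref{lem:fourprimes} is formulated for $P_1\subsetneq P_2\subseteq P_3\subsetneq P_4$ with the two hypotheses $P_1|_A=P_2|_A$ and $P_3|_A=P_4|_A$ kept separate precisely to accommodate this, and its proof uses the two equalities at different moments; your configuration only covers the sub-case $j=i+1$.

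Second, and more seriously, the step you defer as ``the delicate part'' is the entire mathematical content of the proposition; without it the proposal is a correct framing of the problem rather than a proof. For comparison, the paper's computation runs as follows: since quotients by primes are totally ordered, the witnesses may be taken to be monomial pairs; cancelling powers of $y$ (legitimate as $y\notin\ker$) and using the restriction equalities to force positive exponents gives $(ay^k,b)\in P_2\setminus P_1$ and $(cy^m,d)\in P_4\setminus P_3$ with $k,m>0$; raising these to the powers $m$ and $k$ equalizes the $y$-exponents, cross-multiplication together with $P_3|_A=P_4|_A$ yields $(b^mc^k,d^ka^m)\in P_3$, substituting $a^my^{km}\sim b^m$ back and cancelling $b^m\notin\ker(P_3)$ gives $(c^ky^{km},d^k)\in P_3$, and finally Lemma \ref{lemma: power} (uniqueness of $n$-th roots in a domain) extracts the $k$-th root to contradict $(cy^m,d)\notin P_3$. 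Your setup (passing to the cancellative totally ordered quotient, or to its semifield of fractions via Proposition \ref{prop:fracrcongs} --- note that the correspondence you attribute to Proposition \ref{prop:trivkerchain} actually lives there) is compatible with this argument but does not by itself produce the required relation in $E$; some such exponent-balancing and root-extraction step must be supplied.
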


It is noteworthy that this statement alone implies Theorem \ref{thm:dplusone} for the special case of Laurent semirings over semifields.\par
Section \ref{sec:prelim} contains the preliminaries, including some of the results of \cite{JM14}. Section \ref{sec:dimension} contains our main result and its proof. 

\par
\section*{Acknowledgements} The authors would like to thank M\'arton Hablicsek and M\'at\'e Lehel Juh\'asz for the inspiring discussions on the topic.

    \section{Preliminaries}\label{sec:prelim}
    
    In this section we recall some definitions and results from \cite{JM14}, which we refer to in the following chapters.
    \par\smallskip
    In this paper by a {\it semiring} we mean a commutative semiring with multiplicative unit, that is a nonempty set $R$ with two binary operations $(+,\cdot)$ such that $R$ is a commutative monoid with respect to both operations, multiplication distributes over addition and multiplication by $0$ annihilates $R$. A {\it semifield} is a semiring in which all nonzero elements have multiplicative inverse. 
    
    We recall the definition of the three semifields that play a central role in \cite{JM14}. We denote by $\mb$ the semifield with two elements $\{1,0\}$, where $1$ is the multiplicative identity, $0$ is the additive identity and $1+1 = 1$. The {\it tropical semifield}  $\rma$ is defined on the set $\{-\infty\} \cup \mr$, by setting the $+$ operation to be the usual maximum and the $\cdot$ operation to be the usual addition, with $-\infty$ playing the role of the $0$ element. Finally the semifield $\zma$ is the subsemifield of integers in $\rma$.
    
    A polynomial (resp. Laurent polynomial) semirings in $n$ variables $\bx = (x_1,\dots,x_n)$ over a semiring $R$ are defined in the usual way and denoted by $R[\bx]$ (resp. $R(\bx)$).
    \begin{definition}{\rm
    A {\it congruence} $C$ of the semiring $R$ is an equivalence relation on $R$ compatible with the semiring structure.    }
    \end{definition}

    The unique smallest congruence is the diagonal of $R \times R$ which is denoted by $\diag$, also called the {\it trivial congruence}. 
    $R \times R$ itself is the {\it improper congruence} the rest of the congruences are called {\it proper}. Quotients by congruences can be considered in the usual sense, the quotient semiring of $R$ by the congruence $C$ is denoted by $R/C$. 
    
    The {\it kernel} of a congruence is just the equivalence class of the $0$ element. We say that the kernel of a congruence is trivial if it equals $\{0\}$. \par\smallskip 
    
    By the {\it kernel of a morphism} $\varphi$ we mean the preimage of the trivial congruence $\varphi^{-1}(\diag)$, it will be denoted by $Ker(\varphi)$. If $R_1$ is a subsemiring of $R_2$ then the restriction of a congruence $C$ of $R_2$ to $R_1$ is $C|_{R_1}=C\cap (R_1 \times R_1)$.\par\smallskip

   A congruence is called {\it irreducible}  if it can not be obtained as the intersection of two strictly larger congruences. 
    
    A semiring is called {\it cancellative} if whenever $ab = ac$ for some $a,b,c \in R$ then either $a = 0$ or $b=c$. A congruence $C$ for which $R/C$ is cancellative will be called {\it quotient cancellative} or {\it QC}.
    
    Elements of $R \times R$ will be called {\it pairs}, and the smallest congruence containing the pair $\alpha \in R \times R$ will be denoted by $\<\alpha\>_R$ or $\<\alpha\>$ when there is no ambiguity. 
    The {\it twisted product} of the pairs $\alpha = (\alpha_1, \alpha_2)$ and $\beta = (\beta_1, \beta_2)$ is $(\alpha_1\beta_1+\alpha_2\beta_2,\alpha_1\beta_2+\alpha_2\beta_1)$. Whenever for some pairs $\alpha$ and $\beta$ we write $\alpha\beta$ we always mean this twisted product, and similarly, $\alpha^n$ always denotes the twisted $n$-th power of the pair $\alpha$.
    
    
    \par\smallskip
     We call {\it $\mb$-algebra} a commutative semiring with idempotent addition. Throughout this section $A$ denotes an arbitrary $\mb$-algebra. The idempotent addition defines an ordering: $a \geq b \iff a+b = a.$ We call a $\mb$-algebra totally ordered if the idempotent addition defines a total ordering on its elements, i.e. for every $a,b \in A$ one has $a+b = a$ or $a+b = b$.

    \begin{proposition}\label{prop: congbasic} (Proposition 2.2, \cite{JM14})
    Let $C$ be a congruence of a $\mb$-algebra $A$,
    \begin{itemize}
    \item[(i)] For $\alpha \in C$ and an arbitrary pair $\beta$ we have $\alpha\beta \in C$.
    \item[(ii)] If $(a,b) \in C$ and $a \leq c \leq b$ then $(a,c) \in C$ and $(b,c) \in C$. In particular if $(a,0) \in I$ then for every $a\geq c$ we have $(c,0) \in C$.
    \end{itemize}
    \end{proposition}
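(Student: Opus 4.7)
The plan is to prove both parts by direct manipulation, using only the fact that a congruence is compatible with the two semiring operations of $A$, together with the order encoding $x \geq y \iff x+y = x$ coming from idempotence (equivalently, $a \leq b \iff a+b = b$). Neither part presents a real obstacle; the only subtlety is to keep the order convention consistent, which in particular forces $0$ to be the minimum element of any $\mb$-algebra.

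For (i), write $\alpha = (\alpha_1,\alpha_2)$ and $\beta = (\beta_1,\beta_2)$. From $\alpha_1 \sim_C \alpha_2$, together with compatibility of $C$ with multiplication, I obtain the two pairs $(\alpha_1\beta_1,\,\alpha_2\beta_1) \in C$ and $(\alpha_1\beta_2,\,\alpha_2\beta_2) \in C$. Adding $\alpha_2\beta_2$ to both coordinates of the first and $\alpha_2\beta_1$ to both coordinates of the second, and then chaining by transitivity through the common term $\alpha_2\beta_1 + \alpha_2\beta_2$, yields
\[
\alpha_1\beta_1 + \alpha_2\beta_2 \;\sim_C\; \alpha_2\beta_1 + \alpha_1\beta_2,
\]
which is precisely the twisted product $\alpha\beta$.

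For (ii), the hypotheses $a \leq c \leq b$ translate to $a+c = c$ and $c+b = b$. Starting from $(a,b) \in C$ and adding $c$ to both coordinates gives $(a+c,\,b+c) = (c,b) \in C$, and transitivity with the original pair $(a,b)$ then yields $(a,c) \in C$. The ``in particular'' clause is the special case where the pair is $(0,a)$: for any $c$ with $a \geq c$ one has $0 \leq c \leq a$ automatically (since $c+0 = c$ forces $0 \leq c$), so the main part of (ii) applied to $(0,a) \in C$ produces $(c,0) \in C$ as required.
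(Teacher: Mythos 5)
Your proof is correct. The paper states this proposition without proof, citing Proposition 2.2 of \cite{JM14}; your argument for (i) via the two multiplied pairs chained through the common middle term $\alpha_2\beta_1+\alpha_2\beta_2$, and for (ii) via adding $c$ to both coordinates and using transitivity, is the standard verification and matches what one would expect from the source.
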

    
    Proposition \ref{prop: congbasic} has the following important consequence:

\begin{proposition}\label{prop:smfnokernel}
 If $F$ is an additively idempotent semifield then every proper congruence in the semiring of Laurent polynomials $F(x_1,\dots,x_n)$ has a trivial kernel.
\end{proposition}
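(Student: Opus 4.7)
The plan is to prove the contrapositive: if $C$ is a congruence of $F(x_1,\dots,x_n)$ with nontrivial kernel, then $C$ must be improper. So suppose some nonzero Laurent polynomial $p$ satisfies $(p,0) \in C$, and write it in standard form $p = \sum_{i=1}^m c_i \bx^{\bk_i}$ with $c_i \in F \setminus \{0\}$ and the exponent vectors $\bk_i \in \mz^n$ pairwise distinct.

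The first step is to isolate a single monomial in the congruence. Since $F$ is additively idempotent, for each fixed index $i$ the sum $p + c_i \bx^{\bk_i}$ again equals $p$ (collecting on the $\bk_i$ term gives coefficient $c_i + c_i = c_i$). In the canonical order on the $\mb$-algebra $F(\bx)$ this is exactly the statement $p \geq c_i \bx^{\bk_i}$. Applying Proposition \ref{prop: congbasic}(ii) with $a = p$, $c = c_i \bx^{\bk_i}$, and the fact that $(p,0) \in C$, I obtain $(c_i \bx^{\bk_i}, 0) \in C$.

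The second step uses the crucial property distinguishing the Laurent setting: in $F(\bx)$, every nonzero monomial $c_i \bx^{\bk_i}$ is a unit, with inverse $c_i^{-1} \bx^{-\bk_i}$ (here is where both ``$F$ is a semifield'' and ``Laurent'' are indispensable). Since $C$ is a congruence, I can multiply both coordinates of $(c_i \bx^{\bk_i}, 0)$ by $c_i^{-1} \bx^{-\bk_i}$ to get $(1, 0) \in C$. Then multiplying $(1,0)$ by an arbitrary $r \in F(\bx)$ gives $(r,0) \in C$, so $C$ identifies every element with $0$, i.e.\ $C$ is the improper congruence.

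There is no real obstacle: the whole argument is the observation that nontrivial kernel $+$ idempotent addition propagates to a single monomial, and invertibility of monomials in the Laurent semiring over a semifield then forces $(1,0)$ into the congruence. The only point requiring care is to note that the same strategy fails in $F[\bx]$ (where $\bx^{\bk_i}$ is not a unit for $\bk_i \neq 0$), which is why the statement is restricted to the Laurent case.
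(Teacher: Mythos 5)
Your proof is correct and follows essentially the same route as the paper's: both use Proposition \ref{prop: congbasic}(ii) to push a nonzero kernel element down to one of its monomials, then invoke the invertibility of monomials in $F(x_1,\dots,x_n)$ to conclude $(1,0)$ lies in the congruence, making it improper. Your version merely spells out the ordering argument ($p \geq c_i \bx^{\bk_i}$ via idempotency of addition) that the paper leaves implicit.
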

\begin{proof}
If $f  \in F(x_1,\dots,x_n)$ is in the kernel of a proper congruence $I$ then by (ii) of  Proposition \ref{prop: congbasic} we also have that every monomial that appears in $f$ is in the kernel of $I$. On the other hand every monomial in a Laurent semiring over a semifield has multiplicative inverse. Hence if a monomial is in the kernel of a congruence $I$ then so is the multiplicative identity of $F(x_1,\dots,x_n)$, which implies that $I$ is improper. 
\end{proof}
    
    \begin{definition}{\rm We call a congruence $P$ of a  semiring $R$  {\it prime} if it is proper and for every $\alpha, \beta \in R \times R$ such that $\alpha\beta \in P$ either $\alpha \in P$ or $\beta \in P$. We call a semiring a domain if its trivial congruence is prime.
    }
    \end{definition}

\begin{remark}\label{rem:primeandQC}{\rm
The heuristics for choosing this definition is that for a commutative ring $R$ a congruence $C \subset R \times R$ is prime in the above sense if and only if its kernel is a prime ideal in the usual sense. It is also easy to deduce from the definition that every prime congruence is QC (or equivalently every domain is cancellative) and irreducible. The converse is also true - but not obvious: in Theorem 2.12 of \cite{JM14} it was shown that a congruence of a $\mb$-algebra is prime if and only if it is QC and irreducible. The key difference from ring theory (where the class of QC and prime congruences coincide) is that a QC congruence does not need to be irreducible and - as we will see at the end of this section - there are typically much more QC congruences than primes.}
\end{remark}


We recall the following characterization of $\mb$-algebras that are domains:

\begin{proposition}\label{prop:primeorder}(Proposition 2.9, \cite{JM14})
A $\mb$-algebra $A$ is a domain if and only if it is cancellative and totally ordered.  
\end{proposition}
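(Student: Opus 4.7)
The plan is to prove the two implications separately via the twisted product formalism. The forward implication, domain implies cancellative and totally ordered, immediately yields cancellativity by Remark \ref{rem:primeandQC}: $\diag$ is QC (being prime), so $A \cong A/\diag$ is cancellative. The substantive content of the forward direction is extracting the total order.

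For this I would exploit primeness applied to a carefully chosen pair: given $a, b \in A$, form $\alpha = (a+b, a)$ and $\beta = (a+b, b)$. A short computation of the twisted product, using idempotent addition to rewrite $(a+b)^2 = a^2 + ab + b^2$ and $ab + ab = ab$, shows that both coordinates of $\alpha\beta$ equal $a^2 + ab + b^2$, so $\alpha\beta \in \diag$. Primeness then forces $\alpha \in \diag$ or $\beta \in \diag$, i.e., $a+b = a$ or $a+b = b$. Spotting this pair is the main spark required; the rest of the argument is formal.

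For the reverse direction, assume $A$ is cancellative and totally ordered, and take $\alpha = (\alpha_1, \alpha_2)$, $\beta = (\beta_1, \beta_2)$ with $\alpha\beta \in \diag$, so
\[
\alpha_1\beta_1 + \alpha_2\beta_2 = \alpha_1\beta_2 + \alpha_2\beta_1.
\]
Since $\diag$ is symmetric under coordinate swap I may interchange the entries of either pair, so by total order assume $\alpha_1 \geq \alpha_2$ and $\beta_1 \geq \beta_2$. Then $\alpha_1\beta_1$ dominates every $\alpha_i\beta_j$, so the left side collapses to $\alpha_1\beta_1$. Total order then splits the argument into two subcases: if $\alpha_1\beta_2 \geq \alpha_2\beta_1$ the right side is $\alpha_1\beta_2$, so $\alpha_1\beta_1 = \alpha_1\beta_2$, whence $\beta_1 = \beta_2$ by cancellativity (the edge case $\alpha_1 = 0$ forces $\alpha_2 = 0$ as well via $\alpha_1 \geq \alpha_2$, giving $\alpha \in \diag$); the opposite subcase is handled symmetrically. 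Properness of $\diag$, also required for primeness, is automatic whenever $A$ is nonzero, so the proof is complete.
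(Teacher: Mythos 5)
Your proof is correct. The paper itself gives no proof of this proposition (it is quoted from \cite{JM14}), so there is nothing internal to compare against; but your argument is sound and self-contained: the twisted product of $(a+b,a)$ and $(a+b,b)$ does land in $\diag$ since both coordinates equal $a^2+ab+b^2$ under idempotent addition, which cleanly extracts the total order from primeness, and the converse correctly combines the WLOG coordinate swaps, compatibility of the order with multiplication, and cancellativity (with the $\alpha_1=0$ and $\beta_1=0$ edge cases handled via $0\geq c\Rightarrow c=0$). This is essentially the standard argument for Proposition 2.9 of \cite{JM14}.
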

    
    We define dimension similarly to the Krull-dimension in ring theory:
    \begin{definition}{\rm
    The {\it dimension} of a $\mb$-algebra $A$ is the length of the longest chain of prime congruences in $A \times A$ (where by length we mean the number of strict inclusions). The dimension of $A$ will be denoted by $\dim A$.
    }
    \end{definition}
    
While in ring theory every field has Krull-dimension 0, it is not the case for semifields, for example the reader can easily check that $\dim\zma = \dim\rma = 1$.

\begin{proposition}\label{prop: dimB}(Proposition 2.5, \cite{JM14})
\begin{itemize}
\item[(i)] Every $\mb$-algebra maps surjectively onto $\mb$. 
\item[(ii)] The only $\mb$-algebra that is a domain and has dimension $0$ is $\mb$.
\end{itemize}
 \end{proposition}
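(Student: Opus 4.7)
The plan is to prove (i) first and then deduce (ii) almost for free. A surjection $\varphi:A\to\mb$ is determined by the preimage $\mathfrak{m}=\varphi^{-1}(0)$, which one sees must be a prime ideal of $A$ in the classical sense (additively closed, absorbing multiplication, proper, and such that $xy\in\mathfrak{m}$ forces $x\in\mathfrak{m}$ or $y\in\mathfrak{m}$) that is in addition downward closed in the canonical order: if $c\le b$ and $b\in\mathfrak{m}$, then $c\in\mathfrak{m}$. The downward closure is exactly what makes $\varphi$ respect addition, because $b\le a+b$ would otherwise allow $a\in\mathfrak{m}$, $b\notin\mathfrak{m}$, $a+b\in\mathfrak{m}$ simultaneously, spoiling additivity.

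To produce such an $\mathfrak{m}$, I would apply Zorn's lemma to the poset of downward closed ideals of $A$ missing $1$, ordered by inclusion: the ideal $\{0\}$ is a member (since $b\le 0$ forces $b=0$), and every chain has its union as an upper bound. Let $\mathfrak{m}$ be a maximal element. The main step is primeness: assuming $ab\in\mathfrak{m}$ with $a,b\notin\mathfrak{m}$, I would form the smallest downward closed ideals containing $\mathfrak{m}\cup\{a\}$ and $\mathfrak{m}\cup\{b\}$, which admit the explicit description
\[
\mathfrak{m}_a=\{x\in A:x\le m+ra\text{ for some }m\in\mathfrak{m},\ r\in A\},
\]
and analogously for $\mathfrak{m}_b$. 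Maximality forces $1\in\mathfrak{m}_a\cap\mathfrak{m}_b$, yielding relations $1\le m_a+r_aa$ and $1\le m_b+r_bb$; multiplying these inequalities and expanding gives $1\le m_am_b+m_ar_bb+r_aam_b+r_ar_bab$, where each of the four summands lies in $\mathfrak{m}$ (the last one using $ab\in\mathfrak{m}$), so downward closure forces $1\in\mathfrak{m}$, contradicting properness.

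Given the downward closed prime ideal $\mathfrak{m}$, define $\varphi:A\to\mb$ by $\varphi(a)=0$ for $a\in\mathfrak{m}$ and $\varphi(a)=1$ otherwise. Preservation of multiplication reduces to primeness, and preservation of addition in the nontrivial cases uses $b\le a+b$ together with downward closure. This completes (i). For (ii), applying (i) to a zero-dimensional domain $A$ yields a surjection $\varphi:A\to\mb$; since $\mb$ is cancellative and totally ordered, hence a domain by Proposition~\ref{prop:primeorder}, $\text{Ker}(\varphi)$ is a prime congruence of $A$. Because $A$ is itself a domain and $\dim A=0$, the only prime congruence is $\diag$, so $\text{Ker}(\varphi)=\diag$, making $\varphi$ injective and therefore an isomorphism.

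The main conceptual obstacle is recognizing that one cannot simply take a classical maximal ideal: such an ideal may contain elements exceeding $1$ without containing $1$ itself, and the associated characteristic map to $\mb$ would then fail to be additive. Restricting to downward closed ideals is the right remedy, after which the Zorn step, the maximal-implies-prime argument, and the use of the monotonicity lemma $u\le v,\ u'\le v'\Rightarrow uu'\le vv'$ (immediate from distributivity and $a\le b\iff a+b=b$) are all straightforward.
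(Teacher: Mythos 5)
Your proof is correct, and it is worth noting that the paper itself contains no proof of this statement: part (i) is imported verbatim from Proposition 2.5 of \cite{JM14}, and the authors only remark that (ii) ``follows immediately.'' Your deduction of (ii) is precisely that immediate deduction: $\mb$ is cancellative and totally ordered, hence a domain, so the kernel congruence of the surjection from (i) is prime; a zero-dimensional domain admits no prime other than $\diag$, so the surjection is injective and hence an isomorphism. For (i) you supply a complete self-contained argument where the paper offers only a citation, and the argument is sound: the dictionary between surjections onto $\mb$ and proper prime ideals that are downward closed for the order $a\geq b\iff a+b=a$ is exactly right (downward closure is what repairs additivity, since $a,b\leq a+b$), the set $\{0\}$ is indeed downward closed, unions of chains stay in the poset, and the maximal-implies-prime step --- multiplying $1\leq m_a+r_aa$ by $1\leq m_b+r_bb$ via the monotonicity of products, expanding, observing all four summands lie in $\mathfrak{m}$ (the last because $ab\in\mathfrak{m}$), and invoking downward closure to force $1\in\mathfrak{m}$ --- is a correct adaptation of the classical argument. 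The only implicit hypothesis is $1\neq 0$ in $A$ (needed for $\{0\}$ to avoid $1$), without which the proposition itself is false for the trivial semiring; this is a standard convention and not a gap in your argument.
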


We point out that in Proposition 2.5 of \cite{JM14} only the part (i) of the statement was made explicit but (ii) follows immediately. In Theorems 4.9, 4.10 and 4.14 of \cite{JM14} a description of the primes of the polynomial and Laurent polynomial rings over $\mb$, $\zma$ and $\rma$ which was then used to calculate the dimensions in each of this cases.

\begin{proposition}\label{prop:jm14dims}
$\ $
\begin{itemize}
\item[(i)] $\dim\mb[x_1,\dots,x_n] = \dim\mb(x_1,\dots,x_n) = n$.
\item[(ii)] $\dim \zma[x_1,\dots,x_n] = \dim \zma(x_1,\dots,x_n) = n+1$.
\item[(iii)] $\dim \rma[x_1,\dots,x_n] = \dim \rma(x_1,\dots,x_n) = n+1$.

\end{itemize}
\end{proposition}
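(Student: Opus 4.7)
The plan is to reduce all three statements to the base-case computations $\dim \mb$, $\dim \zma$, $\dim \rma$ and then apply Theorem~\ref{thm:dplusone} iteratively. Writing $A[x_1,\dots,x_n] \cong (A[x_1,\dots,x_{n-1}])[x_n]$, and analogously for Laurent polynomials, each application of the main theorem increments the dimension by exactly one, with the finiteness hypothesis preserved throughout the induction. So it suffices to prove $\dim \mb = 0$, $\dim \zma = 1$ and $\dim \rma = 1$.

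The first is Proposition~\ref{prop: dimB}(ii). For $F \in \{\zma, \rma\}$, Proposition~\ref{prop:primeorder} shows $F$ is a domain, hence the trivial congruence $\diag$ is prime; the kernel of the canonical surjection $F \twoheadrightarrow \mb$ (sending every nonzero element to $1$) is also prime, since its quotient $\mb$ is a domain. This yields the chain $\diag \subsetneq \ker(F \twoheadrightarrow \mb)$, so $\dim F \geq 1$. For the upper bound, I would show this kernel is the unique nontrivial prime by direct classification. Given any nontrivial proper prime $P$ on $F$, either $P$ identifies $0$ with some nonzero element (which would force $P$ improper after multiplying by an inverse), or there exist nonzero $x < y$ with $(x,y) \in P$. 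In the latter case, multiplying by $y^{-1}$ yields $(c, 1) \in P$ with $c < 1$, so Proposition~\ref{prop: congbasic}(ii) collapses the whole order-interval between $c$ and $1$. Multiplicative shifts translate this collapse around the group, and either transitivity (in $\zma$) or overlap of intervals of positive length (in $\rma$) forces all of $F \setminus \{0\}$ into a single class. Hence $P = \ker(F \twoheadrightarrow \mb)$, and $\dim F = 1$.

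The main obstacle is precisely this classification of nontrivial primes on $\zma$ and $\rma$. The argument is routine in spirit but requires careful bookkeeping of the interplay between the total order, the multiplicative group structure, and Proposition~\ref{prop: congbasic}(ii); the $\rma$ case additionally needs a covering argument, since the multiplicative group $\mr$ is divisible and identifications must be propagated densely. Once the base dimensions are secured, the induction via Theorem~\ref{thm:dplusone} is mechanical and closes the proof.
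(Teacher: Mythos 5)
Your proof is correct, but it takes a genuinely different route from the paper: the paper does not prove Proposition~\ref{prop:jm14dims} at all, it simply cites Theorems 4.9, 4.10 and 4.14 of \cite{JM14}, where the prime congruences of these polynomial and Laurent polynomial semirings are classified explicitly and the dimensions read off from that classification. You instead derive the proposition from Theorem~\ref{thm:dplusone} by induction on the number of variables, which reduces everything to the base cases $\dim\mb=0$ and $\dim\zma=\dim\rma=1$ (the latter being exactly the fact the paper leaves as an exercise to the reader). This is logically legitimate despite the proposition appearing in the preliminaries: tracing the dependencies, the proof of Theorem~\ref{thm:dplusone} runs through Propositions~\ref{prop: var_inc_dim}, \ref{prop:laurentdim} and \ref{prop:trivkerchain} and never invokes Proposition~\ref{prop:jm14dims} (the only place it is used is Proposition~\ref{lemma: kalina_lemma}, which is itself not used downstream), so there is no circularity. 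Your sketch of the base cases is sound --- the interval-collapsing argument via Proposition~\ref{prop: congbasic}(ii) together with multiplicative translation does show that the only proper congruences of $\zma$ and $\rma$ are $\diag$ and the kernel of the surjection onto $\mb$, both prime --- though you leave the bookkeeping unwritten; that part should be spelled out if this is to stand as a complete proof. The trade-off is clear: the paper's citation yields the full list of primes (strictly more information, and independent of the main theorem, which is why the proposition can serve as a consistency check for it), whereas your argument is self-contained within this paper and shorter, but yields only the dimension count and inverts the expository order in which the result was intended to be read.
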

 
\begin{remark}\label{rem:perri-rowen}
{\rm{
In \cite{PR15} the authors consider a sublattice of all congruences of a rational function semifield, generated by the so-called hyperplane kernels. They define the dimension as the maximum of the length of chains of irreducibles in this particular sublattice. It is then verified, amongst several other results, that the dimension of a rational function field over an archimedean semifield equals the number of variables. This result is somewhat analogous to the main theorem of the current paper, however neither of the two results imply special cases of the other, since our notion of dimension differs from that of \cite{PR15}. We also note that the results of \cite{PR15} are set in the more general context of "supertropical algebra", but this setting contains the usual semirings as a degenerate special case. To avoid possible confusion we point out that our terminology differs from that of \cite{PR15}, where the authors call every cancellative semiring a domain. Also in \cite{PR15} kernels refer to the equivalence class of $1$ in a congruence (of a semifield) and not to the equivalence class of $0$ as in the current paper. }
}
\end{remark}
  
We mentioned in Remark \ref{rem:primeandQC} that QC congruences do not need to be irreducible. Indeed one can find several examples of such congruences by considering the following proposition:

\begin{proposition}\label{prop:intersectprimes}
Let $P_i$ denote the elements of a (possibly infinite) set of prime congruences with trivial kernels in an $\mb$-algebra A. Then $\bigcap P_i$ is a QC congruence.
\end{proposition}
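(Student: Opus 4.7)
The plan is to show directly that the quotient $A/C$, where $C = \bigcap P_i$, is cancellative. The key observation that makes everything work is that the triviality of each $\text{Ker}(P_i)$ means the pair $(a,0)$ lies in $P_i$ if and only if $a = 0$ in $A$ itself, independent of $i$. This uniformity across the family is exactly what is needed to promote a primeness argument at each $P_i$ to an argument at the intersection.

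More explicitly, first I would unpack the cancellativity requirement: given representatives $a,b,c \in A$ with $(ab,ac) \in C$ and $(a,0) \notin C$, we must show $(b,c) \in C$. From $(a,0) \notin C$ we get some index $i_0$ with $(a,0) \notin P_{i_0}$, and since $\text{Ker}(P_{i_0}) = \{0\}$ this forces $a \neq 0$ as an element of $A$. But then, crucially, because \emph{every} $P_i$ in the family also has trivial kernel, the same element $a$ satisfies $(a,0) \notin P_i$ for \emph{all} $i$ simultaneously.

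Next, I would compute the twisted product: the definition gives
\[
(a,0) \cdot (b,c) = (ab + 0 \cdot c,\ ac + 0 \cdot b) = (ab,ac).
\]
Since $(ab,ac) \in C \subseteq P_i$ for each $i$, and $P_i$ is prime, either $(a,0) \in P_i$ or $(b,c) \in P_i$. By the previous step the first alternative is excluded for every $i$, so $(b,c) \in P_i$ for all $i$, hence $(b,c) \in C$. This establishes that $A/C$ is cancellative, which is precisely the QC property.

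I do not expect a serious obstacle here: the proposition is essentially a clean application of Proposition \ref{prop: congbasic}(i) (which underlies the twisted product computation), the definition of primeness, and the uniform behavior that the trivial-kernel hypothesis forces on the family. The only thing worth being careful about is the distinction between $(a,0)$ lying in an individual $P_i$ versus lying in $C$; the trivial-kernel assumption is what collapses this distinction in the direction needed.
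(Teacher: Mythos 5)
Your proof is correct and follows essentially the same route as the paper's: identify $(ab,ac)$ as the twisted product $(a,0)(b,c)$, use the trivial-kernel hypothesis to rule out $(a,0)\in P_i$ for every $i$ simultaneously, and apply primeness of each $P_i$ to conclude $(b,c)\in\bigcap P_i$. The only cosmetic difference is that the paper starts from $a\neq 0$ rather than $(a,0)\notin C$, but under the trivial-kernel assumption these conditions coincide, as you note.
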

\begin{proof}
Assume $(xa,xb) \in \bigcap P_i$ for some $x,a,b \in A$ and $x \neq 0$. Then $(xa,xb) = (x,0)(a,b) \in P_i$ for every $i$. By the assumptions $(x,0) \notin P_i$ for any $i$, hence the prime property implies that $(a,b) \in \bigcap P_i$. 
\end{proof}

Finally we show that the two variable polynomial (or Laurent polynomial) semiring over any $\mb$-algebra contains an infinite ascending chain of QC congruences, hence the class of QC congruences - without further restrictions - does not yield an interesting notion of Krull-dimension. 
    
\begin{proposition}\label{prop:infascqc}
For a $\mb$-algebra $A$ the semirings $A[x,y]$ and $A(x,y)$ contain infinite ascending chains of QC congruences.
\end{proposition}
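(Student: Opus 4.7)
The plan is to reduce to the universal case $A = \mb$ and then build an infinite sequence of pairwise incomparable primes in $\mb[x,y]$ (and $\mb(x,y)$) whose partial intersections furnish a strictly ascending chain of QC congruences. For the reduction, part (i) of Proposition \ref{prop: dimB} supplies a surjection $A \twoheadrightarrow \mb$, which extends to surjections $\pi: A[x,y] \twoheadrightarrow \mb[x,y]$ and $\pi: A(x,y) \twoheadrightarrow \mb(x,y)$. For any congruence $C$ of the codomain, the first isomorphism theorem yields $A[x,y]/\pi^{-1}(C) \cong \mb[x,y]/C$, so pulling back along $\pi$ preserves the QC property, and surjectivity of $\pi$ also preserves strict inclusions. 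It therefore suffices to exhibit the desired chain in $\mb[x,y]$ and in $\mb(x,y)$.

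For each integer $n \geq 1$, let $P_n$ be the kernel of the evaluation homomorphism $\varphi_n : \mb[x,y] \to \rma$ sending $x \mapsto n$ and $y \mapsto 1$, where $n$ and $1$ are viewed as real elements of $\rma$. Since $n$ and $1$ are invertible in $\rma$, the same formulas define an evaluation on the Laurent semiring $\mb(x,y)$. As $\rma$ is a totally ordered cancellative semifield and hence, by Proposition \ref{prop:primeorder}, a domain, every $P_n$ is a prime congruence; and since any nonzero (Laurent) polynomial evaluates to a finite real number rather than $-\infty$, each $P_n$ has trivial kernel in the sense of the current paper. The $P_n$ are pairwise incomparable: the pair $(x, y^n)$ evaluates to $(n, n)$ under $\varphi_n$ but to $(m, n)$ under $\varphi_m$ for $m \neq n$, so $(x, y^n) \in P_n \setminus P_m$ for every $m \neq n$.

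Set $C_n := \bigcap_{i \geq n} P_i$. Proposition \ref{prop:intersectprimes} guarantees that each $C_n$ is QC, and the definition forces the chain $C_1 \subseteq C_2 \subseteq \cdots$. To verify strict inclusion at each step I would use the witness $\alpha_n := (x + y^{n+1},\, x)$: under $\varphi_k$ its two components evaluate to $\max(k, n+1)$ and $k$, which agree precisely when $k \geq n+1$. Hence $\alpha_n \in P_k$ for all $k \geq n+1$, placing $\alpha_n \in C_{n+1}$; while at $k = n$ the components give $n+1$ and $n$, so $\alpha_n \notin P_n$ and in particular $\alpha_n \notin C_n$. Thus $C_n \subsetneq C_{n+1}$ for every $n$, yielding the required infinite ascending chain in both $\mb[x,y]$ and $\mb(x,y)$, which then pulls back to the corresponding chains in $A[x,y]$ and $A(x,y)$.

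The main delicate point is not producing infinitely many primes — varying the evaluation point already gives uncountably many — but rather constructing an explicit witness separating successive QC intersections: one needs a pair whose tropical evaluation as a function of $k$ is identical on the half-line $k \geq n+1$ yet differs at $k = n$. The polynomial $x + y^{n+1}$ is tailor-made for this, since $\max(k, n+1)$ collapses to the linear function $k$ exactly once $k$ crosses the threshold $n+1$.
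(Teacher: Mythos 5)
Your proof is correct and follows essentially the same route as the paper: reduce to $A=\mb$ via the surjection from Proposition \ref{prop: dimB}, take the primes $P_{(k,1)}$ (your kernels of the evaluation maps $x\mapsto k$, $y\mapsto 1$ into $\rma$ are exactly these leading-term congruences), intersect the tails to get QC congruences by Proposition \ref{prop:intersectprimes}, and separate consecutive terms with the same witness $(x+y^{j},x)$. Your version is slightly more self-contained in that it verifies primality and triviality of the kernels directly from the evaluation description instead of citing the classification in \cite{JM14}, but the argument is the same.
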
    
\begin{proof}
By Proposition \ref{prop: dimB}, $\mb$ is a quotient of $A$, hence it is enough to prove the statement for the case $A = \mb$. We recall from Section 4 of \cite{JM14} that to a non-zero real vector $v \in \mr^2$ one can assign a (minimal) prime $P_v$ in $\mb[x,y]$ or $\mb(x,y)$ which is generated by the set of pairs $$\{ (x^{n_1}y^{n_2} + x^{m_1}y^{m_2}, x^{n_1}y^{n_2} ) \mid\smallskip v_1 n_1 + v_2 n_2 \geq  v_1 m_1 + v_2 m_2\}.$$
In other words one takes a (possibly not complete) monomial order by scalar multiplying exponent vectors with a fixed $v$, and the congruence $P_v$ identifies each polynomial with its leading term.  Set $C_n = \bigcap_{k \geq n} P_{(k,1)}$. We claim that $C_1 \subset C_2 \subset \dots$ is an infinite ascending chain of congruences with cancellative quotients. Indeed they are QC by Proposition \ref{prop:intersectprimes} and are contained in each other by definition. Moreover the containments are strict since $(x+y^j, x) \in P_k$ 
 if and only if $k \geq j$.
\end{proof}

    Throughout this paper every semiring will be additively idempotent. We will denote by $\smf(\bx) = \smfnL$ and $\smf[\bx] = \smfn$, and we will use the shorter notation when this does not lead to ambiguity. By a prime we will always mean a prime congruence. A maximal chain of primes will be a (non-refinable) chain of maximal length.
    \newline

    \section{Dimension of the polynomial and Laurent polynomial semirings}\label{sec:dimension}

We begin by showing that the dimension of the polynomial or Laurent polynomial semirings over a finite dimensional $\mb$-algebra is strictly bigger than the dimension of the underlying $\mb$-algebra.

\begin{proposition}\label{prop: var_inc_dim}
Let $A$ be a $\mb$-algebra of finite Krull dimension, then $\dim A(x) \geq \dim A + 1$ and $\dim A[x] \geq \dim A + 1$.
\end{proposition}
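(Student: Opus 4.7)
The plan is to exhibit, given a chain $P_0 \subsetneq P_1 \subsetneq \dots \subsetneq P_d$ of primes in $A$ realizing $d = \dim A$, a chain of length $d+1$ in $A[x]$ (and in $A(x)$). The strategy is first to lift the given chain to $d+1$ primes via an evaluation map, then to prepend one additional prime at the bottom using a monomial construction.

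I would first consider the evaluation homomorphism $\phi \colon A[x] \to A$ (respectively $\phi \colon A(x) \to A$) defined by $x \mapsto 1$; this is a well-defined semiring map because $1$ is invertible in $A$ and addition in $A$ is idempotent. Setting $Q_i := \phi^{-1}(P_i)$, the quotient $A[x]/Q_i \cong A/P_i$ is a domain, so each $Q_i$ is a prime congruence. Since $\phi$ restricts to the identity on the subsemiring of constants, $Q_i|_A = P_i$, and hence $Q_0 \subsetneq Q_1 \subsetneq \dots \subsetneq Q_d$ is a strict chain of length $d$ in $A[x]$ (respectively in $A(x)$).

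To prepend a prime $Q' \subsetneq Q_0$, set $B := A/P_0$, which is totally ordered and cancellative by Proposition \ref{prop:primeorder}. Define the $\mathbb{B}$-algebra $M$ whose nonzero elements are pairs $(a, n) \in (B \setminus \{0\}) \times \mathbb{Z}_{\geq 0}$ (or $\times \mathbb{Z}$ in the Laurent case), with multiplication $(a, n)(b, m) := (ab, n+m)$ and addition given by the lexicographic maximum prioritizing coefficient over degree; then $M$ is totally ordered and cancellative, so it is a domain. Let $\psi \colon A[x] \to M$ send $\sum c_j x^j$ to its lex-largest monomial $(c^*, N)$, where $c^* := \max_j (c_j \bmod P_0)$ and $N$ is the largest $j$ with $c_j \equiv c^* \pmod{P_0}$ (and $\psi(f) := 0$ if all coefficients of $f$ lie in $P_0$). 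The principal technical step—and the main obstacle in the proof—is verifying that $\psi$ is a semiring homomorphism; this relies crucially on the total order and cancellativity of $B$ to guarantee that the lex-largest monomial of a product equals the product of the lex-largest monomials of the factors, and the analogous identity for sums. Granted this, $Q' := \ker \psi$ is a prime.

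To see $Q' \subsetneq Q_0$, observe that the coordinate projection $M \to B$, $(a, n) \mapsto a$, is itself a semiring homomorphism (addition in $M$ projects to the $\max$ on $B$), and its composition with $\psi$ coincides with the map $f \mapsto f(1) \bmod P_0$ defining $Q_0$; hence $Q' \subseteq Q_0$. Strictness is witnessed by the pair $(x, 1)$: since $\phi(x) = 1 = \phi(1)$ we have $(x, 1) \in Q_0$, yet $\psi(x) = (1_B, 1) \neq (1_B, 0) = \psi(1)$ shows $(x, 1) \notin Q'$. The resulting chain $Q' \subsetneq Q_0 \subsetneq Q_1 \subsetneq \dots \subsetneq Q_d$ has length $d + 1$, proving $\dim A[x] \geq \dim A + 1$ and, by the same argument, $\dim A(x) \geq \dim A + 1$.
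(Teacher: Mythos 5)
Your argument is correct, and it rests on the same two ingredients as the paper's proof: evaluation at $x=1$ to reproduce the chain coming from $A$, and a ``leading monomial'' homomorphism onto a totally ordered cancellative semiring of monomials to supply one extra prime. Two differences are worth recording. First, the paper reduces to the case where $A$ is a domain (replacing $A$ by $A/\mathfrak{p}$ for $\mathfrak{p}$ at the bottom of a maximal chain) and then argues via dimension inequalities $\dim A(x)\ge \dim D>\dim A$; you instead pull the whole chain back along $x\mapsto 1$ and only pass to $B=A/P_0$ to build the bottom prime. This is equivalent but more explicit. Second, and more substantively, your choice of lexicographic order --- coefficient first, degree second --- is the one that actually makes the construction compatible with evaluation. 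The paper orders monomials by degree first, and with that order the map $y\mapsto 1$ does \emph{not} descend to the leading-monomial quotient $D$: for monomials $b=a'$ and $c=ay$ one has $c>b$ for all nonzero coefficients, so the pair $(a'+ay,\,ay)$ lies in the congruence defining $D$, yet it evaluates to $(a'+a,\,a)$, which is not diagonal when $a'>a$ (e.g.\ $a'=5$, $a=0$ in $\zma$). Your coefficient-first order is exactly what makes the projection $M\to B$, $(a,n)\mapsto a$, additive, so that $\ker\psi\subseteq Q_0$ as you claim; in this respect your write-up repairs the one delicate point of the published argument. The step you defer --- that $\psi$ is a homomorphism --- is indeed where the real work lies, but your diagnosis of what is needed (total order and cancellativity of $B$, so that the lex-largest monomial of a product is the product of the lex-largest monomials, and similarly for sums) is accurate, and the verification is routine.
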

    
    \begin{proof} First assume $A$ is a domain. By Proposition \ref{prop:primeorder} it is totally ordered with respect to the order coming from addition. Consider the following total ordering on the set of monomials of $A(x)$. Let $a_1y^{n_1}$ and $a_2y^{n_2}$ be two monomials, then $a_1y^{n_1} > a_2y^{n_2}$ if $n_1 > n_2$ or if $n_1 = n_2$ and $a_1 > a_2$. Since $A$ is a domain we can always compare the coefficients. This ordering is compatible with the multiplication on $A(x)$.

    Consider the congruence generated by $(b+c,c)$, when $c \geq b$, where $b,c$ are monomials of $A(x)$. Denote by $D$ the quotient of $A(x)$ by this congruence and let $$\phi: A(x) \rightarrow D,$$ be the quotient map. Note that $D$ is a domain by Proposition \ref{prop:primeorder} because it is totally ordered by construction and is cancellative. The kernel of $\phi$ is a prime congruence, hence $\dim A(x) \geq \dim D$. Now consider an evaluation morphism $$ \psi: D \rightarrow A, \ y \mapsto 1.$$  
Note that $D / \ker \psi = A$, hence $\ker \psi$ is a non-trivial prime congruence of $D$ and thus $\dim D > \dim A$. Hence $\dim A(x) \geq \dim A + 1$.\par

    If $A$ is not a domain, then consider a prime $\mathfrak{p}$ which is part of a maximal chain for $A$. Note that $A/\mathfrak{p}$ is a domain since $\mathfrak{p}$ is prime and $\dim A/\mathfrak{p} = \dim A$. Since $(A/\mathfrak{p})(x)$ is a quotient of $A(x)$ we have $\dim A(x) \geq \dim (A/\mathfrak{p})(x)$, thus $\dim A(x) \geq \dim A + 1$ follows from the first part of the proof. The proof for the case of the polynomial semiring $A[x]$ is essentially the same.

    \end{proof}

One can immediately obtain the following:

    \begin{proposition}\label{lemma: kalina_lemma}
    If $A$ is a $\mb$-algebra and $\dim A[x] = 2$ (or  $\dim A(x) = 2$) then 
    $\dim A = 1$.
    \end{proposition}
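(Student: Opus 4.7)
The plan is to combine Proposition \ref{prop: var_inc_dim} for the upper bound with a quotient argument for the lower bound. First, since Proposition \ref{prop: var_inc_dim} gives $\dim A[x] \geq \dim A + 1$, the hypothesis $\dim A[x] = 2$ forces $\dim A \leq 1$, and the same works for $A(x)$. The real content is to rule out $\dim A = 0$, which I would do by contradiction.

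Suppose $\dim A = 0$ and fix a chain of primes $P_0 \subsetneq P_1 \subsetneq P_2$ in $A[x]$. Let $B = A[x]/P_0$; since $P_0$ is prime, $B$ is a domain, and the chain $\Delta_B \subsetneq P_1/P_0 \subsetneq P_2/P_0$ of prime congruences in $B$ shows $\dim B \geq 2$. On the other hand, restricting $P_0$ to $A$ yields a prime $\mathfrak{p} := P_0|_A$ of $A$, which is precisely the kernel of the composition $A \hookrightarrow A[x] \twoheadrightarrow B$. Because $\dim A = 0$, the quotient $A/\mathfrak{p}$ is a zero-dimensional domain, so Proposition \ref{prop: dimB}(ii) forces $A/\mathfrak{p} = \mb$.

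The image of $A$ inside $B$ is therefore just $\mb$, and since $B$ is generated as a $\mb$-algebra by this image together with the class $\bar x$ of $x$, the evaluation map $\mb[x] \to B$ sending $x \mapsto \bar x$ is surjective. Consequently $\dim B \leq \dim \mb[x] = 1$ by Proposition \ref{prop:jm14dims}(i), contradicting $\dim B \geq 2$. The Laurent case $\dim A(x) = 2$ is handled identically, replacing $\mb[x]$ by $\mb(x)$ (of dimension $1$ by the same proposition). I expect the main obstacle to be the clean identification of the image of $A$ in $B$ as $\mb$: this is where the hypothesis $\dim A = 0$ genuinely enters, and without it one cannot cut $B$ down to a quotient of the single-variable polynomial semiring over $\mb$ to invoke Proposition \ref{prop:jm14dims}.
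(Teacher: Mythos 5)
Your proof is correct and follows essentially the same route as the paper: Proposition \ref{prop: var_inc_dim} gives $\dim A \leq 1$, and the case $\dim A = 0$ is excluded by using Proposition \ref{prop: dimB} to collapse the coefficients to $\mb$ and then invoking $\dim \mb[x] = \dim\mb(x) = 1$ from Proposition \ref{prop:jm14dims}. The paper phrases this as mapping a strictly increasing chain of primes of $A(x)$ to one of $\mb(x)$, which is the same argument you spell out via the quotient $B = A[x]/P_0$.
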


    \begin{proof}
    By Proposition  \ref{prop: var_inc_dim} $\dim A(x) > \dim A$ (resp. $\dim A[x] > \dim A$). Thus $\dim A = 0$ or $1$. If $\dim A = 0$ then by Proposition \ref{prop: dimB} $A/P =\mb$ for any prime $P$ of $A$. Hence any strictly increasing chain of primes in $A(x)$ maps to a strictly increasing chain of primes in $\mb(x)$, and by Proposition \ref{prop:jm14dims} we have $\dim A(x) = \dim\mb(x) = 1$.
    \end{proof}


Next, we show that chains of prime congruences of $A(x)$ in which all primes have the same kernel can stabilize at most once when restricted to $A$. We will need the following two simple lemmas:

    \begin{lemma}\label{lemma: calc} 
    Let $A$ be a cancellative $\mb$-algebra and $a,b,c,d \in A$ such that $a>b$ and $c>d$, then $ac > bd$. 
    \end{lemma}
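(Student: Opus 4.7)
The plan is to unpack the definition of $>$ (namely $a>b$ means $a+b=a$ and $a\neq b$) and split the argument into showing $ac\geq bd$ first, then ruling out equality. For the first part, I would multiply the relation $a+b=a$ by $c$ to get $ac+bc=ac$, so $ac\geq bc$, and similarly multiply $c+d=c$ by $b$ to get $bc\geq bd$. Transitivity of $\geq$ (which is immediate from the definition) then yields $ac\geq bd$.

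The heart of the proof is showing the inequality is strict. I would assume for contradiction that $ac=bd$. Combined with the chain $ac\geq bc\geq bd$ established above, this forces $bc=bd$. If $b\neq 0$, cancellativity immediately gives $c=d$, contradicting $c>d$. The only subtlety is the edge case $b=0$: then $bd=0$, so the assumption $ac=bd$ becomes $ac=0=a\cdot 0$; since $a>b=0$ implies $a\neq 0$, cancellativity yields $c=0$, but $c>d\geq 0$ forces $c\neq 0$, again a contradiction.

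I expect no serious obstacle: the argument is essentially bookkeeping with the order-via-addition definition plus two applications of cancellativity (once in the generic case $b\neq 0$, once in the degenerate case $b=0$). The only thing to be a little careful about is not forgetting the $b=0$ branch, since cancellativity in the sense defined in the paper ($ab=ac$ with $a\neq 0$ implies $b=c$) explicitly excludes the zero multiplier, so one cannot cancel $b$ blindly from $bc=bd$.
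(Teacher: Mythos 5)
Your proof is correct, and it follows the same basic template as the paper's --- sandwich $bd$ below $ac$ via an intermediate product, then use cancellativity to rule out equality --- but with a different choice of intermediate term that has a concrete consequence. The paper writes $ac \geq ad \geq bd$, so that when $ac = bd$ collapses the chain it is the factor $a$ that gets cancelled from $ac = ad$; since $a > b \geq 0$ forces $a \neq 0$, no case split is needed. You instead route through $bc$, so you must cancel $b$ from $bc = bd$, and $b$ can perfectly well be $0$; you correctly noticed this and handled the degenerate branch (where $ac = 0$ and $a \neq 0$ force $c = 0$, contradicting $c > d$). Both arguments are sound; the paper's choice of intermediate term is the slightly slicker one because the element being cancelled is automatically nonzero by hypothesis, while yours costs one extra (correctly executed) case. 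Your explicit warning that the paper's definition of cancellativity only permits cancelling a nonzero factor is exactly the right thing to be careful about here.
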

    
    \begin{proof}
    Clearly $ac \geq ad \geq bd$. If $ac = bd$, then we have $ac = ad$, and then by
    cancellativity $c = d$ or $a = 0$ both contradicting our assumptions.
    \end{proof}
    
     \begin{lemma}\label{lemma: power} 
    Let $A$ be a $\mb$-algebra and $P$ be a prime congruence in $A\times A$. If $(x^n, y^n) \in P$ for $n > 0$ then $(x,y) \in P$.
    \end{lemma}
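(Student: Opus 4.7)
The plan is to pass to the quotient $A/P$ and exploit the fact, guaranteed by Proposition~\ref{prop:primeorder}, that $A/P$ is simultaneously cancellative and totally ordered. Writing $\bar x, \bar y$ for the images of $x, y$ in $A/P$, the hypothesis $(x^n, y^n)\in P$ becomes $\bar x^n = \bar y^n$, and the conclusion $(x,y)\in P$ becomes $\bar x = \bar y$. So the statement reduces to: in any cancellative totally ordered $\mb$-algebra, $\bar x^n = \bar y^n$ implies $\bar x = \bar y$.

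Using the total order on $A/P$, I would assume without loss of generality that $\bar x \geq \bar y$, and argue that strict inequality is impossible. Note that $0$ is the additive minimum of any $\mb$-algebra, since $a + 0 = a$ means $a \geq 0$ for every $a$. So if $\bar y = 0$, then $\bar x^n = 0$, and a short induction using cancellativity (from $\bar x \cdot \bar x^{n-1} = \bar x \cdot 0$, either $\bar x = 0$ or $\bar x^{n-1} = 0$) yields $\bar x = 0 = \bar y$. Otherwise $\bar y \neq 0$, which forces $\bar x \neq 0$ (since $\bar x \geq \bar y$ means $\bar x + \bar y = \bar x$, so $\bar x = 0$ would give $\bar y = 0$). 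With both elements nonzero, I can apply Lemma~\ref{lemma: calc} inductively: the strict inequality $\bar x > \bar y$ would, multiplied by itself $n-1$ times, yield $\bar x^n > \bar y^n$, contradicting $\bar x^n = \bar y^n$. Therefore $\bar x = \bar y$, which lifts to $(x,y) \in P$.

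There is no genuinely hard step here; the main thing to be careful about is that Lemma~\ref{lemma: calc} requires nonzero elements, which is why the zero case must be dispatched separately using cancellativity. Once that is done, the strict monotonicity of the $n$-th power map on nonzero elements of a cancellative totally ordered $\mb$-algebra, combined with total order trichotomy, closes the argument cleanly.
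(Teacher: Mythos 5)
Your proof is correct and follows essentially the same route as the paper: pass to the totally ordered cancellative quotient $A/P$, assume $\bar x > \bar y$ without loss of generality, and iterate Lemma~\ref{lemma: calc} to contradict $\bar x^n = \bar y^n$. The separate treatment of the case $\bar y = 0$ is harmless but unnecessary, since the hypothesis $a > b$ in Lemma~\ref{lemma: calc} already forces $a \neq 0$ (as $0$ is the minimum of the order), so the lemma applies directly even when $\bar y = 0$.
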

    
    \begin{proof} Consider $A/P$, which is a domain since $P$ is prime. Then we have that $x^n = y^n$ in $A/P$. We want to show that $x=y$. Assume for contradiction that $x \neq y$. Recall that domains are totally ordered so without loss of generality assume that $x > y$. Then after applying Lemma \ref{lemma: calc} $n$ times we arrive at a contradiction.
    \end{proof}

We are ready to prove:

\begin{lemma}\label{lem:fourprimes}
Let $\smf$ be a $\mb$-algebra and $P_1 \subsetneq P_2 \subseteq P_3 \subsetneq P_4$ prime congruences of $\smf(x)$ (resp. $\smf[x]$), satisfying $\ker(P_1) = \ker(P_2)=\ker(P_3)=\ker(P_4)$. Then at least one of $P_1|_\smf \subsetneq P_2|_\smf$ or $P_3|_\smf \subsetneq P_4|_\smf$ holds.
\end{lemma}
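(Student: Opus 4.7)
The plan is to assume for contradiction that $P_1|_A = P_2|_A$ and $P_3|_A = P_4|_A$, and then produce two elements of $A$ that must be simultaneously unequal modulo $P_3|_A (= P_4|_A)$ and equal in $A(x)/P_4$. First I would note that $(x,0)\notin P_i$: if $(x,0)$ belonged to the common kernel then each $P_i$ would be determined by its restriction to $A$, and $P_1|_A = P_2|_A$ would force $P_1 = P_2$. Picking any $(f,g)\in P_2\setminus P_1$ and using that $A(x)/P_1$ is a totally ordered domain (Proposition \ref{prop:primeorder}), every polynomial is congruent modulo $P_1$ to a single monomial term, so after replacing I obtain a monomial pair $(ax^n, bx^m)\in P_2\setminus P_1$ with $n\geq m$. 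In the domain $A(x)/P_2$, cancelling the nonzero $x^m$ gives $(ax^k,b)\in P_2$ with $k=n-m$, and this pair cannot lie in $P_1$ because multiplying it by $(x^m,0)$ would return $(ax^n,bx^m)\in P_1$. The case $k=0$ is excluded since it would yield $(a,b)\in P_2\setminus P_1$, violating $P_1|_A=P_2|_A$. A short check using the common kernel shows $a,b$ are nonzero modulo $P_1|_A$. The identical procedure applied to $P_3\subsetneq P_4$ produces $(a'x^{k'},b')\in P_4\setminus P_3$ with $k'>0$ and $a',b'$ nonzero modulo $P_3|_A$.

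The main calculation takes place in the totally ordered domain $A(x)/P_3$, where $ax^k=b$ (since $P_2\subseteq P_3$) and $a'x^{k'}\neq b'$; WLOG $a'x^{k'}>b'$, the other case being symmetric. Raising to powers via Lemmas \ref{lemma: calc}--\ref{lemma: power} gives $a^{k'}x^{kk'}=b^{k'}$ and $(a')^k x^{kk'}>(b')^k$. Multiplying the strict inequality by the nonzero $b^{k'}$, substituting $b^{k'}=a^{k'}x^{kk'}$ on the right, and cancelling the nonzero $x^{kk'}$ yields
\[
(a')^k b^{k'} > a^{k'} (b')^k
\]
in $A(x)/P_3$, a genuine strict inequality between two elements of $A/(P_3|_A)$. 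By the hypothesis $P_3|_A = P_4|_A$, the same two elements remain unequal in $A(x)/P_4$. But in $A(x)/P_4$ we have both $ax^k=b$ and $a'x^{k'}=b'$, so raising to the $k'$-th and $k$-th powers respectively and equating through $a^{k'}(a')^k x^{kk'}$ forces $(a')^k b^{k'} = a^{k'} (b')^k$, the desired contradiction.

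I expect the main obstacle to be the first step: extracting from the abstract strict containment $P_1\subsetneq P_2$ a clean monomial witness $(ax^k,b)$ with $k>0$ and $a,b$ nonzero, while keeping all the hypotheses---common kernel, common restriction to $A$, and $x\notin\ker P_i$---simultaneously in play. Once these clean witnesses are in hand the remainder is a direct computation valid uniformly in both the Laurent and polynomial settings, since it relies only on cancellativity and the total order of the quotient domains.
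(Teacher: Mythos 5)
Your proof is correct and follows essentially the same route as the paper's: the same reduction to monomial witnesses $(ax^{k},b)\in P_2\setminus P_1$ and $(a'x^{k'},b')\in P_4\setminus P_3$ with $k,k'>0$ and coefficients outside the common kernel, followed by the same key pair $\bigl((a')^{k}b^{k'},\,a^{k'}(b')^{k}\bigr)$ of elements of $A$ shown to lie in $P_4$ but not in $P_3$, contradicting $P_3|_A=P_4|_A$. The only cosmetic difference is that you establish ``not in $P_3$'' directly via strict inequalities in the totally ordered quotient, whereas the paper argues by contradiction using cancellation of $b^{m}$ and Lemma \ref{lemma: power}.
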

\begin{proof}

    Assume for a contradiction that $P_1|_\smf = P_2|_\smf$ and $P_3|_\smf = P_4|_\smf$. Since  $P_1 \subsetneq P_2$ and  $P_3 \subsetneq P_4$ there exist two pairs, 
    \begin{center}
    $(f_1, g_1) \in P_2 \setminus P_1$, for some $f_1, g_1 \in \smf(x)\ (resp.\ \smf[x])$\\
    $(f_2, g_2) \in P_4 \setminus P_3$, for some $f_2, g_2 \in \smf(x)\ (resp.\ \smf[x])$
    \end{center}
The quotient by a prime is totally ordered by Proposition \ref{prop:primeorder}, which by the definition of the ordering means that every sum is identified with at least one of its summands. Hence we may assume that $f_1, f_2, g_1$ and $g_2$ are monomials and write the following instead:
    \begin{center}
    $(ay^{k_1}, b y^{k_2}) \in P_2 \setminus P_1$, for some $a_1,b_1 \in \smf$\\
    $(cy^{m_1}, d y^{m_2}) \in P_4 \setminus P_3$, for some $a_2,b_2 \in \smf,$
    \end{center}
By the assumption that the kernels of $P_{1,2,3,4}$ are the same, none of the elements of the above pairs may be in $\ker(P_1)=\dots =\ker(P_4)$, implying that $a,b,c,d \notin \ker(P_1)$. It also follows that if $y \in \ker(P_1)$ then $k_1=k_2=m_1=m_2=0$ and the statement follows from $(a,b) \in  P_2 \setminus P_1$ and $(c,d) \in P_4 \setminus P_3$. For the remainder of the proof we assume that $y \notin \ker(P_1)$. Without loss of generality we can assume that $k_1 \geq k_2$ and $m_1 \geq m_2$, and set $k = k_1 - k_2$ and $m = m_1 - m_2$. Since the quotient by a prime is cancellative and $y$ is not in the kernel of any of $P_{1,2,3,4}$ it follows that $(ay^k, b)  \in P_2 \setminus P_1$ and $(cy^m, d)  \in P_4 \setminus P_3$. Also by the assumption $P_1|_\smf = P_2|_\smf$ and $P_3|_\smf = P_4|_\smf$, we have that $k,m > 0$.

 Thus we have, 
    $$(a^m y^{km}, b^m) \in P_2 \subset P_4$$
    $$(c^k y^{km}, d^k) \in P_4$$
 Multiplying the first equation with $c^k$ the second with $a^m$ we obtain:
    $$(b^m c^k, d^k a^m) \in P_4$$
    as ${P_3}|_{\smf} = {P_4}|_{\smf} $ we also have 
    \begin{equation*}\label{key_pair}(b^m c^k, d^k a^m) \in P_3\end{equation*}
    Multiplying by $y^{km}$
    $$(b^m c^k y^{km}, d^k a^m y^{km}) \in P_3$$
    But we also know that 
    $$(a^m y^{km}, b^m) \in P_2 \subseteq P_3$$
    So from the above two we obtain that 
    \begin{equation}\label{b_eq_0} (b^m c^k y^{km}, d^k b^m) \in P_3 \end{equation}
    Now since $b \notin \ker(P_3)$ we also have that $b^m \in \ker(P_3)$, since $P_3$ is prime implying that its quotient is cancellative. Thus we obtain:
    $$(c^k y^{km}, d^k) \in P_3$$
    But then, since $k>0$, by Lemma \ref{lemma: power} we have
    $$(cy^m, d) \in P_3$$
    a contradiction.

\end{proof}

\begin{proposition}\label{prop:laurentdim}
\begin{itemize}
\item[(i)] If ${\mathfrak{p}_1} \subset {\mathfrak{p}_2} \subset \dots$ is a chain of primes in $\smf(x)$ or $\smf[x]$ such that the kernel of every $\mathfrak{p}_i$ is the same, then after restricting the chain to $A$, in ${\mathfrak{p}_1}|_{\smf} \subseteq {\mathfrak{p}_2}|_{\smf}\dots$ equality occurs at most once.
\item[(ii)] For an additively idempotent semifield $F$ we have $\dim F(x_1,\dots,x_n) = \dim F + n$.
\end{itemize}
\end{proposition}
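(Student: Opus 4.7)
Part (i) follows immediately from Lemma \ref{lem:fourprimes}. The plan is a direct contradiction argument: suppose the restricted chain $\mathfrak{p}_1|_A \subseteq \mathfrak{p}_2|_A \subseteq \dots$ has equality at two distinct steps, say indices $i_1 < i_2$ with $\mathfrak{p}_{i_1}|_A = \mathfrak{p}_{i_1+1}|_A$ and $\mathfrak{p}_{i_2}|_A = \mathfrak{p}_{i_2+1}|_A$. Setting $P_1 = \mathfrak{p}_{i_1}$, $P_2 = \mathfrak{p}_{i_1+1}$, $P_3 = \mathfrak{p}_{i_2}$, $P_4 = \mathfrak{p}_{i_2+1}$, we have $P_1 \subsetneq P_2 \subseteq P_3 \subsetneq P_4$, all four share the common kernel by hypothesis, and both of the restricted inclusions $P_1|_A = P_2|_A$ and $P_3|_A = P_4|_A$ are equalities, directly contradicting Lemma \ref{lem:fourprimes}.

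For part (ii), I would induct on $n$, the base case $n=0$ being trivial. For the inductive step, write $F(x_1,\dots,x_n) = A(x_n)$ where $A := F(x_1,\dots,x_{n-1})$ is a $\mb$-algebra (additive idempotence passes to Laurent polynomials). The lower bound $\dim F(x_1,\dots,x_n) \geq \dim F + n$ is obtained by iterating Proposition \ref{prop: var_inc_dim}, each application of which contributes one to the dimension. For the upper bound, take any chain $\mathfrak{p}_0 \subsetneq \dots \subsetneq \mathfrak{p}_k$ of primes in $F(x_1,\dots,x_n)$. The key observation is that, since $F$ is a semifield, Proposition \ref{prop:smfnokernel} forces every proper (hence every prime) congruence of $F(x_1,\dots,x_n)$ to have trivial kernel, so the hypothesis of part (i) is met automatically. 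Applying part (i) with the subsemiring $A$, the restricted chain $\mathfrak{p}_0|_A \subseteq \dots \subseteq \mathfrak{p}_k|_A$ has at most one repetition, and deduplicating gives a strictly increasing chain in $A$ of length at least $k-1$. Combined with the inductive hypothesis $\dim A = \dim F + n - 1$, this yields $k \leq \dim F + n$, matching the lower bound.

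Two small routine verifications are required along the way: that the restriction of a prime congruence of $A(x_n)$ to $A$ is again a prime congruence (immediate from the twisted-product definition, since $A$ contains $1$ and $0$ so properness is preserved), and that the common-kernel condition of part (i) holds for the original chain — this is exactly where the semifield hypothesis enters through Proposition \ref{prop:smfnokernel}. The real technical content has already been absorbed into Lemma \ref{lem:fourprimes}; part (ii) is essentially a packaging result, and the main obstacle to anticipate is not conceptual but rather remembering that one cannot iterate this argument over a general $\mb$-algebra base (where congruences need not have trivial kernel), which is precisely the obstruction that forces the more elaborate machinery used to prove Theorem \ref{thm:dplusone} in full generality.
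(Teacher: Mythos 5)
Your proposal is correct and follows essentially the same route as the paper: part (i) is exactly the reduction to Lemma \ref{lem:fourprimes} via $P_1=\mathfrak{p}_{i_1}$, $P_2=\mathfrak{p}_{i_1+1}$, $P_3=\mathfrak{p}_{i_2}$, $P_4=\mathfrak{p}_{i_2+1}$, and part (ii) is the induction on $n$ using Proposition \ref{prop:smfnokernel} to guarantee the trivial-kernel hypothesis, which is precisely the (one-line) argument the paper gives. Your filled-in details — restriction of primes being prime, the count $k\leq\dim A+1$, and the lower bound from Proposition \ref{prop: var_inc_dim} — are all accurate.
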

\begin{proof}
For (i), assume for contradiction that equality occurs at least twice, say ${\mathfrak{p}_i}|_{\smf} = {\mathfrak{p}_{i+1}}|_{\smf}$ and ${\mathfrak{p}_j}|_{\smf} = {\mathfrak{p}_{j+1}}|_{\smf}$ with $i+1 \leq j$. Then by setting $P_1 = {\mathfrak{p}_i}$, $P_2 = {\mathfrak{p}_{i+1}}$, $P_3 = {\mathfrak{p}_j}$ and $P_4 = {\mathfrak{p}_{j+1}}$ we arrive at contradiction with Lemma \ref{lem:fourprimes}. (ii) follows by induction from (i) and Proposition \ref{prop:smfnokernel} which asserts that in $F(x_1,\dots,x_n)$ the kernel of every congruence is trivial.
\end{proof}
   
We recall that a cancellative semiring $R$ embeds into its semifield of fractions $\fracr$. The elements of $\fracr$ are the equivalence classes in $R \times (R\setminus\{0\})$ of the relation $(r_1,s_1)\sim(r_2,s_2)\Leftrightarrow r_1s_2 = r_2s_1$, with operations $(r_1,s_1)+(r_2,s_2) = (r_1s_2+r_2s_1, s_1s_2)$, $(r_1,s_1)(r_2,s_2) = (r_1r_2,s_1s_2)$. As usual for $(r,s) \in \fracr$ we will write $\frac{r}{s}$. We refer to \cite{Go99} for the details of this construction.


We would like to point out that part (i) of Proposition \ref{prop:fracrcongs} is essentially the same as Lemma 2.4.4 of \cite{PR15} and both of parts (i) and (ii) are likely well-known. We provide a short proof for the convenience of the reader. Also, note that Proposition \ref{prop:fracrcongs} is not specific to the additively idempotent case.

\begin{lemma}\label{lem:smftransitive}
Let $F$ be a semifield. Let $C \subseteq F \times F$ be symmetric and reflexive and closed under addition and multiplication, that is for $(a_1,b_1),(a_2,b_2) \in C$ we have that $(a_1+a_2,b_1+b_2) \in C$ and $(a_1a_2,b_1b_2) \in C$. Then $C$ is a congruence.
\end{lemma}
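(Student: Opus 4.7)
The plan is to observe that $C$ already satisfies reflexivity, symmetry, and closure under the coordinatewise addition and multiplication of pairs, so what is missing from being a congruence is transitivity together with compatibility with the semiring operations. Compatibility is immediate: for any $c \in F$, reflexivity gives $(c,c) \in C$, so from $(a,b) \in C$ we deduce $(a+c,b+c) = (a,b) + (c,c) \in C$ and $(ac,bc) = (a,b)\cdot(c,c) \in C$.

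The real content is transitivity. Suppose $(a,b),(b,c) \in C$; I would split on whether the ``middle'' element $b$ is zero. If $b \neq 0$, the semifield hypothesis provides $b^{-1}$, and reflexivity gives $(b^{-1},b^{-1}) \in C$. Coordinatewise multiplication of $(a,b)$ with $(b,c)$ yields
\[
(ab,\,bc) \in C,
\]
and multiplying this with $(b^{-1},b^{-1})$ cancels the middle $b$ on each coordinate to give $(a,c) \in C$. If instead $b=0$, then $(a,0)$ and $(0,c)$ both lie in $C$, and closure under addition gives
\[
(a+0,\,0+c) = (a,c) \in C.
\]

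The only mildly nonobvious point is the choice of multiplication in the $b\neq 0$ case: one has to pair $(a,b)$ with $(b,c)$ in exactly the right way so that $b$ appears in both coordinates and can then be removed by the inverse. Everything else is bookkeeping, and since the argument uses only the semifield structure together with the four hypotheses on $C$, it makes no use of idempotence, which is consistent with the authors' remark that the lemma is not specific to the additively idempotent setting.
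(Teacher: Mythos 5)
Your proof is correct and follows essentially the same route as the paper's: handle $b=0$ by adding the two pairs, and handle $b\neq 0$ by multiplying $(a,b)$ with $(b,c)$ coordinatewise and then cancelling with $(b^{-1},b^{-1})$. The paper leaves the compatibility-with-operations point implicit, whereas you spell it out via reflexivity, but the substance is identical.
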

\begin{proof}
We only need to show that $C$ is transitive. Assume that $(a,b),(b,c) \in C$. If $b = 0$, then $(a+0, 0+c) = (a,c) \in C$. If $b \neq 0$ then $(b^{-1},b^{-1})\in C$ and $(ab, bc) \in C$, and after multiplying it follows that $(a,c) \in C$.
\end{proof}

\begin{proposition}\label{prop:fracrcongs}
Let $R$ be a cancellative semiring. For a congruence $C$ of $R$ denote by $\<C\>_{\fracr}$ the congruence generated by $C$ in $\fracr$.
\begin{itemize}
\item[(i)] $(a,b) \in \<C\>_{\fracr}$ if and only if there is an $s \in R\setminus\{0\}$ such that $(sa,sb) \in C$. In particular $\<C\>_{\fracr}$ is proper if and only if $\ker(C) = \{0\}$.
\item[(ii)] If $C$ is a QC congruence of $R$ with $\ker(C)= \{0\}$ then $\<C\>_{\fracr}|_R = C$ and for any congruence $\otherc$ of $\fracr$ we have $\<\otherc|_R\>_{\fracr} = \otherc$. 
\item[(iii)] If $C$ is a QC congruence of $R$ with $\ker(C) =\{0\}$, then $C$ is prime if and only if $\<C\>_{\fracr}$ is prime. If $\otherc$ is a congruence of $\fracr$ then $\otherc$ is prime if and only if $\otherc|_R$ is prime.
\end{itemize}
\end{proposition}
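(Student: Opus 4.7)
The plan is to handle the three parts in turn, with (ii) resting on (i) and (iii) on both, using a common clear-denominators trick throughout: any pair $\alpha = (p/q, r/t) \in \fracr \times \fracr$ satisfies $(qt, 0)\cdot\alpha = (pt, qr) \in R \times R$ via the twisted product, and multiplying back by $(1/(qt), 0)$ recovers $\alpha$, so every pair in $\fracr \times \fracr$ is of the form $(1/s, 0)\cdot\alpha'$ with $\alpha' \in R\times R$ and $s \in R\setminus\{0\}$.

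For part (i), I would introduce
\[ D = \{(a,b) \in \fracr \times \fracr : (sa, sb) \in C \text{ for some } s \in R\setminus\{0\}\}. \]
The inclusion $D \subseteq \<C\>_{\fracr}$ is direct: if $(sa, sb) \in C$, multiplying by $(s^{-1}, 0)$ in $\fracr$ produces $(a,b) \in \<C\>_{\fracr}$. For the reverse inclusion it suffices to show that $D$ is a congruence of $\fracr$ containing $C$. Containment is immediate (take $s = 1$), and since $\fracr$ is a semifield, Lemma \ref{lem:smftransitive} reduces being a congruence to reflexivity, symmetry, and closure under componentwise addition and multiplication --- these follow by combining witness scalars $s_1, s_2$ via their product $s_1 s_2$, which remains nonzero because $R$ is cancellative. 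The ``in particular'' clause then reads off: $\<C\>_{\fracr}$ is improper iff $(1, 0) \in D$ iff $(s, 0) \in C$ for some $s \neq 0$ iff $\ker(C) \neq \{0\}$.

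For part (ii), $\<C\>_{\fracr}|_R = C$ reduces via (i) to the following: if $a, b \in R$ and $(sa, sb) \in C$ with $s \neq 0$, then in $R/C$ we have $\overline{s}\,\overline{a} = \overline{s}\,\overline{b}$ with $\overline{s} \neq 0$ (trivial-kernel assumption), so QC cancels $\overline{s}$ to give $\overline{a} = \overline{b}$. For $\<\otherc|_R\>_{\fracr} = \otherc$, the nontrivial inclusion is the clear-denominators move: given $(p/q, r/t) \in \otherc$, the pair $(pt, qr) = (qt, 0)\cdot(p/q, r/t)$ lies in $\otherc|_R \subseteq \<\otherc|_R\>_{\fracr}$, and multiplying back by $(1/(qt), 0)$ places $(p/q, r/t)$ in $\<\otherc|_R\>_{\fracr}$.

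For part (iii), the two ``easy'' directions are the standard restriction-of-a-prime arguments, with the first statement using $\<C\>_{\fracr}|_R = C$ from part (ii). Both ``hard'' directions run identically via denominator-clearing: given $\alpha\beta \in \<C\>_{\fracr}$ (resp.\ $\in \otherc$) with $\alpha, \beta \in \fracr \times \fracr$, write $\alpha = (1/s,0)\alpha'$ and $\beta = (1/t,0)\beta'$ with $\alpha', \beta' \in R \times R$; then $\alpha'\beta' = (st, 0)\cdot \alpha\beta$ lies in $\<C\>_{\fracr} \cap (R\times R) = C$ by part (ii) (resp.\ in $\otherc \cap (R \times R) = \otherc|_R$); primality of $C$ (resp.\ of $\otherc|_R$) places one of $\alpha', \beta'$ in $C$ (resp.\ $\otherc|_R$); and multiplying by $(1/s, 0)$ or $(1/t, 0)$ yields the desired membership in $\<C\>_{\fracr}$ (resp.\ $\otherc$). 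I would also record that for the $\otherc$ statement the QC and trivial-kernel hypotheses on $\otherc|_R$ are automatic, since $R/\otherc|_R$ embeds into the semifield $\fracr/\otherc$. The main obstacle is purely bookkeeping --- verifying twisted-product identities such as $(1/s,0)(s,0)\alpha = \alpha$ carefully --- and nothing conceptually deep occurs beyond the clearing-denominators trick.
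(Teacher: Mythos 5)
Your proposal is correct and follows essentially the same route as the paper: the same set $D$ (the paper calls it $C'$) verified to be a congruence via Lemma \ref{lem:smftransitive} with witness scalars combined by products, the same clear-denominators argument for (ii), and the same denominator-clearing reduction to primality in $R$ for (iii). The only cosmetic differences are that you spell out the QC cancellation in (ii) explicitly and prove both directions of the second statement of (iii) directly rather than deducing it from the first statement and (ii).
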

\begin{proof}
For (i) set $$C' = \{(a,b) \in \fracr\times\fracr|\:\exists s\in R\setminus\{0\}:\: (sa,sb)\in C\}.$$ Since every $s\in R\setminus\{0\}$ has a multiplicative inverse in $\fracr$ it is clear that $C \subseteq C' \subseteq \<C\>_{\fracr}$. Hence one only needs to see that $C'$ is a congruence. If $s_1,s_2 \in R\setminus\{0\}$ is such that $(s_1 a_1,s_1 b_1) \in C$ and  $(s_2 a_2,s_2 b_2) \in C$ for some $(a_1,b_1),(a_2,b_2) \in  \fracr\times\fracr$ then we have $$(s_1 s_2(a_1+a_2), s_1 s_2(b_1+b_2)) \in C$$ and  $$(s_1 s_2(a_1 a_2), s_1 s_2(b_1 b_2)) \in C$$ showing that $C'$ is closed under addition and multiplication (note that $s_1s_2 \neq 0$ since $R$ is cancellative). Since $C'$ is clearly symmetric and reflexive it follows from Lemma \ref{lem:smftransitive} that $C'$ is indeed a congruence. It follows that $\<C\>_{\fracr}$ is proper if and only if there exists no $s  \in  R\setminus\{0\}$ such that $(s,0) \in C$ or equivalently if $\ker(C) = \{0\}$. \par\smallskip
For (ii) first note that it is immediate from the definition of $C'$ that if $C$ is a QC congruence of $R$ with $\ker(C)= \{0\}$ then $C' \cap R \times R = C$, implying that $\<C\>_{\fracr}|_R = C$. On the other hand if $\otherc$ is a congruence of $\fracr$ then it is clear that  $\<\otherc|_R\>_{\fracr} \subseteq \otherc$. For the other direction if $(\frac{r_1}{s_1}, \frac{r_2}{s_2}) \in\otherc$ then $(r_1s_2, r_2s_1) \in \otherc|_R$ implying that  $(\frac{r_1}{s_1}, \frac{r_2}{s_2}) \in \<\otherc|_R\>_{\fracr}$. \par\smallskip
For the first statement of (iii) recall that the restriction of a prime to a subsemiring is always a prime, hence  
if $\<C\>_{\fracr}$ is a prime congruence, where $C$ is a congruence of $R$ with $\ker(C) =\{0\}$, then $C =\<C\>_{\fracr}|_R$ is also a prime. For the other direction assume that $C$ is a prime of $R$ with  $\ker(C) =\{0\}$ and  we have a twisted product $(\frac{r_1}{s_1}, \frac{r_2}{s_2})(\frac{r'_1}{s'_1}, \frac{r'_2}{s'_2}) \in \<C\>_{\fracr}$. Then by (i) it follows that $(r_1s_2, r_2s_1)(r'_1s'_2, r'_2s'_1) \in C$. Since $C$ is a prime congruence we obtain that one of the factors in the twisted product, say $(r_1s_2, r_2s_1)$, has to be in $C$ and thus $(\frac{r_1}{s_1}, \frac{r_2}{s_2}) \in \<C\>_{\fracr}$ showing that $\<C\>_{\fracr}$ is prime. The second statement in (iii) follows from the first statement and (ii).
\end{proof}

We also recall the following well-known statement:

\begin{proposition}\label{prop:smfcong1}
In a semifield every proper congruence is determined by the equivalence class of $1$.
\end{proposition}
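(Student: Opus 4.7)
The plan is to recover a proper congruence $C$ of $F$ entirely from the class $H = \{a \in F \mid (a,1) \in C\}$, in two short steps: (i) show that properness forces $\ker(C) = \{0\}$, and (ii) show that for nonzero elements $a, b$, membership $(a,b) \in C$ is detected by $a b^{-1} \in H$.

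For step (i), I would suppose that $(a, 0) \in C$ for some nonzero $a$. Because $F$ is a semifield, $a^{-1}$ exists, so multiplying the pair $(a, 0)$ by the reflexive pair $(a^{-1}, a^{-1}) \in C$ yields $(1, 0) \in C$; multiplying further by $(r, r)$ for arbitrary $r \in F$ then gives $(r, 0) \in C$ for every $r$, whence $C = F \times F$, contradicting properness. Hence the only pair of $C$ involving a zero coordinate is $(0, 0)$.

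For step (ii), given nonzero $a, b \in F$, I would multiply $(a, b)$ by the reflexive pair $(b^{-1}, b^{-1}) \in C$ to produce $(a b^{-1}, 1) \in C$; conversely multiplying $(a b^{-1}, 1)$ by $(b, b)$ recovers $(a, b)$. Thus $(a, b) \in C$ if and only if $a b^{-1} \in H$, and combined with step (i) this shows that $C$ is completely determined by $H$. The content is essentially formal, the only small subtlety being the separate treatment of pairs containing $0$, which step (i) handles, so I do not anticipate any real obstacle.
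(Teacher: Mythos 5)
Your proof is correct and follows essentially the same route as the paper's: properness forces $\ker(C)=\{0\}$, and for nonzero $a,b$ one has $(a,b)\in C$ if and only if $(ab^{-1},1)\in C$, by multiplying with the reflexive pairs $(b^{-1},b^{-1})$ and $(b,b)$. The paper states this in one line; you have merely spelled out the same argument in full detail.
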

\begin{proof}
Indeed if $C$ is a proper congruence of a semifield then $\ker(C)= \{0\}$ and $(a,b) \in C$ if and only if $a=b=0$ or $(a/b, 1) \in C$.
\end{proof}

Next we collect some elementary observations about additively idempotent semifields that are domains which we will need to prove our main result. We point out that an additively idempotent semifield needs not to be a domain in general. If $A$ is a cancellative $\mb$-algebra that is not totally ordered  (see \cite{JM14} for several such examples)  then by Proposition \ref{prop:primeorder} $\fraca$ is an additively idempotent semifield that is not a domain. In the proof of Proposition \ref{prop:smfdomains} we will often use the following trivial but important fact:

\begin{lemma}
Let $A$ be a $\mb$-algebra. If $x,y \in A$ both have multiplicative inverses then $x \geq y$ if and only if $1/y \geq 1/x$.
\end{lemma}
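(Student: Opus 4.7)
The plan is to unfold the definition of the order coming from idempotent addition and then multiply through by a suitable invertible element. By hypothesis the idempotent addition in $A$ gives $x \geq y$ precisely when $x+y = x$, and our goal is the analogous statement $\tfrac{1}{y} + \tfrac{1}{x} = \tfrac{1}{y}$.

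Assume $x \geq y$, so $x + y = x$. Since $x$ and $y$ are invertible, commutativity of multiplication implies that $xy$ is invertible with inverse $x^{-1}y^{-1}$. Multiplying the relation $x+y = x$ on both sides by $x^{-1}y^{-1}$ and applying distributivity yields
\[
x\cdot x^{-1}y^{-1} \;+\; y\cdot x^{-1}y^{-1} \;=\; x\cdot x^{-1}y^{-1},
\]
which simplifies to $\tfrac{1}{y} + \tfrac{1}{x} = \tfrac{1}{y}$. By the definition of the order this is precisely $\tfrac{1}{y} \geq \tfrac{1}{x}$. The converse direction is obtained by running the same argument with $x^{-1}$ and $y^{-1}$ in place of $x$ and $y$, using that these are again invertible (with inverses $x$ and $y$ respectively).

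There is no real obstacle: the only thing to be careful about is that one has to justify that $xy$ admits a multiplicative inverse before multiplying, but this is automatic from commutativity. The entire content of the lemma is that inversion reverses the addition-induced order, and this follows directly from distributivity once the order is rewritten as an equation.
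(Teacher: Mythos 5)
Your argument is correct and is essentially identical to the paper's own proof: both rewrite $x \geq y$ as $x+y=x$, multiply by $\frac{1}{xy}$, and use distributivity to obtain $\frac{1}{y}+\frac{1}{x}=\frac{1}{y}$, with the converse following by applying the same argument to the inverses. No issues.
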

\begin{proof}
$x \geq y$ means $x+y = x$, multiplying both sides by $\frac{1}{xy}$ we get $1/y+1/x= 1/y$ showing that $1/y \geq 1/x$.
\end{proof}

\begin{proposition}\label{prop:smfdomains}
Let $F$ be an additively idempotent semifield that is a domain.
\begin{itemize}
\item[(i)] Every proper congruence of $F$ is prime. 
\item[(ii)] The congruences of $F$ form a chain. Moreover if $\dim F$ is finite, then every congruence is principal, i.e. generated by $(1,x)$ for some $x \in F\setminus\{0\}$.
\item[(iii)] For $x,y \in  F\setminus\{0\}$, we have that $(1,y) \in \<(1,x)\>$ if and only if there exist an $n \in \mz$ such that $x^{-n} \leq y \leq x^n$.
\end{itemize}
\end{proposition}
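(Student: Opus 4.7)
The plan is to prove the three parts in the order (i), (iii), (ii), since (iii) is needed to compare arbitrary congruences in (ii), and (i) tells us that every proper congruence in (ii) is automatically prime.

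For (i), the key fact is that $F$ is totally ordered by Proposition~\ref{prop:primeorder}, so addition coincides with taking the larger element. Given a twisted product $(a_1,a_2)(b_1,b_2)\in C$ with $C$ proper, the total order lets me assume $a_1\ge a_2$ and $b_1\ge b_2$, and reduce to the case $a_1,b_1\ne 0$ (else the relevant factor is already $(0,0)\in C$). Then $a_1b_1$ dominates the other three products, so the twisted product collapses to $(a_1b_1,\,a_1b_2+a_2b_1)$, and its second coordinate further collapses by the total order to one of $a_1b_2$ or $a_2b_1$. Multiplying by $a_1^{-1}$ in the first case yields $(b_1,b_2)\in C$, and multiplying by $b_1^{-1}$ in the second yields $(a_1,a_2)\in C$.

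For (iii), using $\<(1,x)\>=\<(1,x^{-1})\>$ I may assume $x\ge 1$. The ``if'' direction is a direct computation: iterating multiplication of $(1,x)$ by $x^{\pm 1}$ and transitivity gives $(1,x^n)\in\<(1,x)\>$ for every $n\in\mz$, hence $(x^{-n},x^n)\in\<(1,x)\>$, and Proposition~\ref{prop: congbasic}(ii) yields $(1,y)\in\<(1,x)\>$ whenever $x^{-n}\le y\le x^n$. The ``only if'' direction is where the main obstacle lies, and I would handle it by producing an explicit congruence bounding $\<(1,x)\>$ from above. Define
\[
C'=\{(a,b):a=b=0,\text{ or }a,b\ne 0\text{ and }x^{-n}b\le a\le x^n b\text{ for some }n\ge 0\}.
\]
Reflexivity, symmetry, and multiplicative closure are immediate; transitivity uses $[x^{-n},x^n]\cdot[x^{-m},x^m]\subseteq[x^{-(n+m)},x^{n+m}]$; and additive closure follows from compatibility of the order with addition ($a\ge x^{-n}b$ and $c\ge x^{-n}d$ give $a+c\ge x^{-n}(b+d)$, and similarly for the upper bound). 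By Lemma~\ref{lem:smftransitive}, $C'$ is a congruence containing $(1,x)$, so $\<(1,x)\>\subseteq C'$. A useful byproduct is that $(1,0)\notin C'$, so $\<(1,x)\>$ is automatically proper whenever $x\ne 0$.

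For (ii), by Proposition~\ref{prop:smfcong1} each proper congruence $C_i$ is determined by $E_i=\{z:(1,z)\in C_i\}$. To establish the chain property, suppose $C_1,C_2$ are proper and incomparable; pick $x\in E_1\setminus E_2$ and $y\in E_2\setminus E_1$, and WLOG $x,y>1$ using that each $E_i$ is closed under inverses. Since $\<(1,x)\>\subseteq C_1$, part (iii) forces $y>x^n$ for every $n\ge 0$, for otherwise $y\in E_1$. But then $y^{-1}<1<x<y$, so by (iii) applied to $\<(1,y)\>\subseteq C_2$ we conclude $x\in E_2$, a contradiction. For the principality assertion, combine (i) (every proper congruence is prime) with the chain property just proved: if $\dim F=d$, there are exactly $d$ proper congruences $C_1\subsetneq\cdots\subsetneq C_d$. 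Given any $(1,x)\in C_k\setminus C_{k-1}$ (with $C_0=\diag$), the congruence $\<(1,x)\>$ is proper by the byproduct of (iii), contained in $C_k$, and not in $C_{k-1}$, and hence equals $C_k$.
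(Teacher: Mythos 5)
Your proof is correct and follows essentially the same route as the paper: the heart of both arguments is the explicit congruence $C'$ (the paper's $H$) of pairs sandwiched between powers of $x$, shown to be a congruence via Lemma \ref{lem:smftransitive}. The only departures are minor --- you prove (i) by collapsing the twisted product directly using the total order instead of invoking Proposition \ref{prop:primeorder} on the quotient, and you derive the chain property in (ii) from part (iii) rather than directly from Proposition \ref{prop: congbasic}(ii).
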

\begin{proof}
First note that a proper congruence of any semifield is always cancellative, since if $(ca,cb) \in C$ for $c \neq 0$ then multiplying by $c^{-1}$ we get $(a,b) \in C$. Now (i) follows from Proposition \ref{prop:primeorder} and the fact that the quotient of a totally ordered $\mb$-algebra is also totally ordered. \par\smallskip
For (ii) assume that there are two congruences $C_1$ and $C_2$ such that $C_1 \not\subseteq C_2$ and $C_2 \not\subseteq C_1$.  Then by Proposition \ref{prop:smfcong1} we have $x,y \in F\setminus\{0\}$ such that $(1,x) \in C_1\setminus C_2$ and  $(1,y) \in C_2\setminus C_1$. By possibly replacing $x$ or $y$ with their multiplicative inverse we may assume that $x,y \geq 1$. Moreover $F$ is totally ordered, thus without loss of generality we can set $x \geq y$. Now it follows from (ii) of Proposition \ref{prop: congbasic} that $(1,y) \in C_1$, a contradiction. When $\dim F$ is finite this implies that there is a unique chain of primes $\diag = P_0 \subset P_1 \dots\subset P_{\dim F}$ in F. Choosing any $(a,b) \in P_k \setminus P_{k-1}$ we see that $\<(a/b,1)\> = P_k$ proving the second statement in (ii). \par\smallskip
For (iii) let $H \subset F \times F$ be the set that consists of the pair $(0,0)$ and the pairs $(a,b) \in  (F\setminus\{0\})\times  F\setminus\{0\}$ for which exists an $n \in \mz$ such that $x^{-n} \leq b/a \leq x^n$. We need to show that $H = \<(1,x)\>$ to prove the claim. Clearly we have $(1,x) \in H$ and by Proposition \ref{prop:primeorder} we also have that $H \subseteq \<(1,x)\>$ so we only need to show that $H$ is a congruence. Let $(a_1,b_1),(a_2,b_2) \in H$ and let $n_1,n_2$ be integers such that  $x^{-n_1} \leq b_1/a_1 \leq x^{n_1}$ and  $x^{-n_2} \leq b_2/a_2 \leq x^{n_2}$. We can replace the one of $n_{1,2}$ with the smaller absolute value by the other and assume that $n_1 = n_2 = n$. Now we have that $$x^{-n} = \frac{x^{-n}b_1+x^{-n}b_2}{b_1+b_2}\leq\frac{a_1+a_2}{b_1+b_2}\leq\frac{x^{n}b_1+x^{n}b_2}{b_1+b_2}= x^n,$$ showing that $(a_1+a_2,b_1+b_2) \in H$. To show that $H$ is closed under products, consider the inequalities:  $$x^{-2n} = \frac{(x^{-n}b_1)(x^{-n}b_2)}{b_1b_2}\leq\frac{a_1a_2}{b_1b_2}\leq\frac{(x^{n}b_1)(x^{n}b_2)}{b_1b_2}= x^{2n},$$ implying that $(a_1a_2, b_1b_2) \in H$. Finally $H$ is symmetric since $x^{-n} \leq b/a \leq x^n$ if and only if $x^{-n} \leq a/b \leq x^{n}$, hence by Lemma \ref{lem:smftransitive} $H$ is a congruence.
\end{proof}

\begin{corollary}
If an $\mb$-algebra $A$ is a domain, then the prime congruences of $A$ with trivial kernels form a chain.
\end{corollary}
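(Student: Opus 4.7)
The plan is to reduce to the already-established fact that all proper congruences of an additively idempotent semifield that is a domain form a chain (Proposition \ref{prop:smfdomains}(ii)), by using the correspondence between primes with trivial kernel in $A$ and primes in $\fraca$ from Proposition \ref{prop:fracrcongs}.

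First I would verify that $\fraca$ is itself an additively idempotent semifield that is a domain. Since $A$ is a domain, Proposition \ref{prop:primeorder} tells us $A$ is cancellative and totally ordered, so $A$ embeds into $\fraca$. To see that $\fraca$ is totally ordered, take two nonzero fractions $\frac{a}{b}, \frac{c}{d}$ and observe that by the total ordering of $A$ either $ad+bc = ad$ or $ad+bc = bc$; after dividing both sides by $bd$ (using cancellativity and $bd\neq 0$), this translates into $\frac{a}{b}+\frac{c}{d}$ being equal to $\frac{a}{b}$ or to $\frac{c}{d}$. Thus $\fraca$ is totally ordered and cancellative, hence a domain by Proposition \ref{prop:primeorder}.

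Next, I would invoke Proposition \ref{prop:smfdomains}(ii), which asserts that the congruences of the additively idempotent semifield-domain $\fraca$ form a chain (and by (i) of the same proposition every proper congruence is actually prime). Finally, Proposition \ref{prop:fracrcongs}(ii)--(iii) supplies an inclusion-preserving bijection $C \mapsto \<C\>_{\fraca}$ between prime congruences of $A$ with trivial kernel and (proper) congruences of $\fraca$, whose inverse is restriction $\otherc \mapsto \otherc|_A$. Since the target of this bijection is totally ordered by inclusion, so is its source, proving the corollary.

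There is no real obstacle here: the work has already been done in Propositions \ref{prop:fracrcongs} and \ref{prop:smfdomains}. The only point that requires a brief verification is that $\fraca$ inherits the total order from $A$, and this is immediate from cancellativity.
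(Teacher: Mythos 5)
Your proposal is correct and follows exactly the paper's route: the paper's proof is the one-liner ``This follows immediately from Proposition \ref{prop:fracrcongs} and (ii) of Proposition \ref{prop:smfdomains}.'' You simply fill in the details the paper leaves implicit (that $\fraca$ is a totally ordered domain, and that the extension/restriction maps give an inclusion-preserving bijection), all of which check out.
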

\begin{proof}
This follows immediately from Proposition \ref{prop:fracrcongs} and (ii) of Proposition \ref{prop:smfdomains}.
\end{proof}

\begin{remark}{\rm
It can be read off from the proof that (iii) of Proposition \ref{prop:smfdomains} holds for any additively idempotent semifield. We would also like to point out that the statement of (iii) is likely well known.
}
\end{remark}

Let $F$ be an additively idempotent semifield that is a domain. For $x,y \in F\setminus\{0\}$ we will write $x \dsim_F y$ whenever $\<(1,x)\>_F = \<(1,y)\>_F$. It follows from (i) and (ii) of Proposition \ref{prop:smfdomains} that when $F$ is finite dimensional the number of $\dsim_F$ equivalence classes  is $\dim F + 1$.

\begin{lemma}\label{lem:stickykernels}
Let $A$ be a $\mb$-algebra that is a domain, and $x,y,z \in A\setminus\{0\}$ with $(1,x) \in \<(1,\frac{y}{z})\>_{\fraca}$. Then for any prime congruence $P$ with $x \in \ker(P)$ we also have that at least one of $y \in \ker(P)$ or $z \in \ker(P)$ hold.
\end{lemma}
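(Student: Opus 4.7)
My plan is to unwind the hypothesis via part (iii) of Proposition \ref{prop:smfdomains} to obtain an explicit inequality in $\fraca$, clear denominators so it descends to $A$, and then use part (ii) of Proposition \ref{prop: congbasic} together with primeness of $P$ to put $y$ or $z$ into $\ker(P)$.

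First I would apply (iii) of Proposition \ref{prop:smfdomains} to the pair $(1, x)$ and the generator $(1, y/z)$ of the ambient congruence in $\fraca$. This produces an integer $n \in \mz$ with $(y/z)^{-n} \leq x \leq (y/z)^n$ in $\fraca$. Since $\<(1, y/z)\>_{\fraca} = \<(1, z/y)\>_{\fraca}$, swapping the roles of $y$ and $z$ flips the sign of $n$; as the conclusion is symmetric in $y$ and $z$, I may assume $n \geq 0$ and aim for $z \in \ker(P)$. Multiplying the left inequality $(y/z)^{-n} \leq x$ by $y^n$ (an element of $A\setminus\{0\}$ which is a unit in $\fraca$) clears denominators to give $z^n \leq x y^n$ in $\fraca$, and this inequality lives in $A$ because $A \hookrightarrow \fraca$ is order-preserving (the order comes from addition, which is the same on both sides).

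Next, from $(x, 0) \in P$, multiplying by the pair $(y^n, y^n)$ (equivalently, using (i) of Proposition \ref{prop: congbasic}) gives $(x y^n, 0) \in P$. Since $z^n \leq x y^n$ in $A$, the "in particular" clause of (ii) of Proposition \ref{prop: congbasic} yields $(z^n, 0) \in P$, i.e.\ $z^n \in \ker(P)$. Finally, because $P$ is prime, the quotient $A/P$ is a domain and hence cancellative with no nonzero nilpotents: from $\bar z \cdot \bar z^{n-1} = 0 = \bar z \cdot 0$, either $\bar z = 0$ or $\bar z^{n-1} = 0$, and induction forces $\bar z = 0$. Thus $z \in \ker(P)$, completing the argument in the case $n \geq 0$; the case $n < 0$ runs identically after exchanging $y$ with $z$, producing $y \in \ker(P)$.

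I do not expect a serious obstacle. The only point requiring care is the sign of $n$, since the two ends of the symmetric inequality $(y/z)^{-n} \leq x \leq (y/z)^n$ select which of $y$ or $z$ ultimately lands in $\ker(P)$; exploiting the $y \leftrightarrow z$ symmetry of both the hypothesis and the conclusion removes this nuisance immediately.
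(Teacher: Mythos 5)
Your proof is correct and follows essentially the same route as the paper: invoke (iii) of Proposition \ref{prop:smfdomains} to get $(y/z)^{-n}\leq x\leq (y/z)^{n}$, clear denominators to land in $A$, and use Proposition \ref{prop: congbasic}(ii) plus primeness to conclude. The only cosmetic difference is that you dispose of the sign of $n$ by the $y\leftrightarrow z$ symmetry, while the paper treats the cases $n\geq 0$ and $n<0$ separately.
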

\begin{proof}
By (iii) of Proposition \ref{prop:smfdomains} we have that there exist an $n \in \mz$ such that $\frac{z^{n}}{y^{n}} \leq x \leq \frac{y^n}{z^n}$. If $n \geq 0$ then after multiplying by $y^{n}$ we obtain $z^n \leq xy^n$. Since $xy^n \in \ker(P)$ by Proposition \ref{prop: congbasic} we have that $z^n \in ker(P)$. Since $P$ is prime it follows that $z \in \ker(P)$. If $n < 0$ then after multiplying by $z^{-n}$ we obtain that $y^{-n} \leq xz^{-n}$. Since $xz^{-n} \in A$ we have $xz^{-n} \in \ker(P)$ and it follows that $y^{-n} \in \ker(P)$ and thus $y \in \ker(P)$.
\end{proof}

\begin{proposition}\label{prop:trivkerchain}
Let $A$ be a $\mb$-algebra that is a domain, with $\dim A < \infty$. Then $\dim A = \dim \fraca$, in particular the primes of $A$ with a trivial kernel form a chain of maximal length.
\end{proposition}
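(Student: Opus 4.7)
First, the inequality $\dim A \geq \dim \fraca$ follows by pulling back a maximal chain $\diag = \tilde P_0 \subsetneq \dots \subsetneq \tilde P_n$ of primes in $\fraca$: by Proposition \ref{prop:fracrcongs}, restriction preserves strict containment (part (ii)) and primeness (part (iii)), and every restriction automatically has trivial kernel, so we directly obtain a chain of trivial-kernel primes of $A$ of length $n = \dim \fraca$. Once the reverse inequality is also shown, this same chain witnesses the ``in particular'' clause.

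For $\dim A \leq \dim \fraca$ I would induct on $d = \dim A$. The base case $d = 0$ follows from Proposition \ref{prop: dimB}(ii), which forces $A = \mb = \fraca$. For the inductive step, fix a chain $\diag = Q_0 \subsetneq Q_1 \subsetneq \dots \subsetneq Q_d$ and let $j$ be the largest index with $\ker Q_j = \{0\}$ (well defined since kernels of primes grow monotonically along any chain). When $j \geq 1$, Propositions \ref{prop:fracrcongs}(i)--(ii) and \ref{prop:smfdomains}(i) combine to produce a length-$j$ chain of primes $\<Q_0\>_\fraca \subsetneq \dots \subsetneq \<Q_j\>_\fraca$ in $\fraca$; since $\ker Q_j = \{0\}$ one has the identification $\fraca/\<Q_j\>_\fraca \cong \mbox{Frac}(A/Q_j)$, and the induction hypothesis applied to $A/Q_j$ (of dimension $d - j < d$) gives $\dim \fraca/\<Q_j\>_\fraca = \dim A/Q_j \geq d - j$, whence $\dim \fraca \geq j + (d - j) = d$.

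The main obstacle is the remaining case $j = 0$: then $Q_1$ already has nontrivial kernel, $\<Q_1\>_\fraca$ is improper by Proposition \ref{prop:fracrcongs}(i), and we can no longer induct on the quotient $A/Q_0 = A$, which has the same dimension $d$. To handle this I would invoke Lemma \ref{lem:stickykernels}: pick $x \in \ker Q_1 \setminus \{0\}$, so that $\<(1, x)\>_\fraca$ is a proper (in particular nontrivial) prime of $\fraca$ by Proposition \ref{prop:smfdomains}(i), and writing it as $\<(1, y/z)\>_\fraca$ with $y, z \in A \setminus \{0\}$ the lemma forces $y \in \ker Q_1$ or $z \in \ker Q_1$. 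Combining this rigidity with the chain structure and principality of congruences of $\fraca$ from Proposition \ref{prop:smfdomains}(ii), the plan is to show that the natural map $A \to \mbox{Frac}(A/Q_1)$ factors through a proper quotient of $\fraca$, giving $\dim \mbox{Frac}(A/Q_1) < \dim \fraca$. The induction hypothesis on $A/Q_1$ (dimension $d - 1 < d$) then yields $d - 1 = \dim \mbox{Frac}(A/Q_1) < \dim \fraca$, i.e., $d \leq \dim \fraca$, which completes the induction; executing this descent to $\mbox{Frac}(A/Q_1)$ carefully is the delicate step, since there is no direct semiring homomorphism $\fraca \to \mbox{Frac}(A/Q_1)$ when $\ker Q_1 \neq \{0\}$.
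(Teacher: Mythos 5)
Your overall architecture is the same as the paper's: prove $\dim A \geq \dim\fraca$ by restricting a maximal chain of primes of $\fraca$ to $A$ via Proposition \ref{prop:fracrcongs}, then induct, splitting according to whether the relevant prime in a maximal chain of $A$ has trivial kernel. Your choices to induct on $\dim A$ rather than on $\dim\fraca$, and to pass to the last trivial-kernel prime $Q_j$ rather than just to the first nontrivial prime, are harmless variations, and your case $j\geq 1$ is correct.

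The genuine gap is the case $j=0$, which is precisely the hard case. Your plan there is to show that the natural map $A \to \mbox{Frac}(A/Q_1)$ factors through a proper quotient of $\fraca$. No such factorization can exist: any $b \in \ker(Q_1)\setminus\{0\}$ is invertible in $\fraca$, so its image under any semiring homomorphism out of $\fraca$ is invertible, while commutativity of the diagram forces that image to be $0$; hence the target would have to be the zero semiring. (The paper explicitly warns that there is no natural map from $\fraca$ to $\mbox{Frac}(A/Q_1)$ in this situation.) What actually closes this case is not a homomorphism but a count of $\dsim$-equivalence classes: every nonzero element of $\mbox{Frac}(A/Q_1)$ has the form $\frac{[a]}{[b]}$ with $a,b\notin\ker(Q_1)$; by (iii) of Proposition \ref{prop:smfdomains}, $\frac{a}{b}\dsim_{\fraca}\frac{c}{d}$ implies that $\frac{[a]}{[b]}$ and $\frac{[c]}{[d]}$ are $\dsim$-equivalent in $\mbox{Frac}(A/Q_1)$, so every class downstairs is the image of a class upstairs; and Lemma \ref{lem:stickykernels} shows that the entire $\dsim_{\fraca}$-class of $x\in\ker(Q_1)\setminus\{0\}$ contributes nothing nonzero downstairs. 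Since for a finite-dimensional semifield domain the number of $\dsim$-classes equals the dimension plus one, this gives $\dim\mbox{Frac}(A/Q_1) \leq \dim\fraca - 1$, after which your induction closes as intended. You identified the right lemma but not the mechanism by which it enters; as written, the $j=0$ case is a statement of intent whose announced route is blocked.
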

\begin{proof}
First it follows immediately from Proposition \ref{prop:fracrcongs} that $\dim A \geq \dim\fraca$ since the unique chain of primes in $\fraca$ restricts to a chain of primes in $\dim A$ of the same length.
We will prove by induction on $\dim \fraca$. If $\dim \fraca = 0$ then by Proposition \ref{prop: dimB} $\fraca \simeq \mb$, and since $A$ embeds into $\fraca$ we also have that $A \simeq \mb$. \par\smallskip
Next we assume that $\dim \fraca = d > 0$ and that the claim holds for all $d' < d$. Let $\diag = P_0 \subset P_1 \subset \dots \subset P_{\dim A}$ be a chain of maximal length in $A$ and set $A' = A/P_1$. Clearly $\dim A' = \dim A - 1$. If $\ker(P_1) = \{0\}$ then applying Proposition \ref{prop:fracrcongs} we see that $P_1$ extends to a prime $\<P_1\>_{\fraca}$ of $\fraca$ and $\dim \fraca/\<P_1\>_{\fraca} = d-1$. It follows that $\dim \fracap = d-1$ and applying the induction hypothesis we obtain $\dim A' = d-1$, and thus $\dim A = d$. \par\smallskip
We are left to deal with the case when $0 \neq x \in \ker(P_1)$. First note that the elements of $\fracap$ can be written as $\frac{[a]}{[b]}$ with $a,b \in A$ and $b \notin \ker(P_1)$, where $[a],[b]$ denote the images of $a,b$ in $A'$. (Note however that there is no natural map from $\fraca$ to $\fracap$ in this case.) Now it follows from (iii) of Proposition \ref{prop:smfdomains} that for $\frac{[a]}{[b]}, \frac{[c]}{[d]} \in \fracap$, if we have that $\frac{a}{b} \dsim_{\fraca} \frac{c}{d}$ then $\frac{[a]}{[b]} \dsim_{\fracap} \frac{[c]}{[d]}$. 
Finally it follows from Lemma \ref{lem:stickykernels} that whenever $x \dsim_{\fraca} \frac{a}{b}$ at least one of $a$ or $b$ map to $0$ in $A'$, hence $\dsim_{\fracap}$ has strictly less equivalence classes than $\dsim_{\fraca}$. We obtained that $\dim\fracap \leq d-1$, and hence by the induction hypothesis we have that $\dim A' = \dim\fracap$ and it follows that $\dim A = \dim A'+1 = d$.
\end{proof}

We are ready to state our main result:

\begin{theorem}\label{thm:dplusone}
Let $A$ be a $\mb$-algebra with $\dim A < \infty$. Then we have that $\dim A(x) = \dim A[x] = \dim A + 1$.
\end{theorem}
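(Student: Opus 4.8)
The lower bounds $\dim A(x) \ge \dim A + 1$ and $\dim A[x] \ge \dim A + 1$ are already furnished by Proposition \ref{prop: var_inc_dim}, so the whole task is to establish the matching upper bounds. My first move would be a reduction to the case where $A$ is a domain. Given a chain of primes in $A[x]$ (resp.\ $A(x)$) realizing $\dim A[x]$, I would contract its bottom term to a prime $\mathfrak q = \mathfrak p_0|_A$ of $A$; every member of the chain contains the congruence generated by $\mathfrak q$, so the whole chain descends to $(A/\mathfrak q)[x] = A[x]/\langle\mathfrak q\rangle$ with no loss of strict inclusions. As $A/\mathfrak q$ is a domain with $\dim A/\mathfrak q \le \dim A$, it suffices to prove $\dim B[x] \le \dim B + 1$ and $\dim B(x) \le \dim B + 1$ for every domain $B$.

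For a domain $B$ the plan is to exploit Proposition \ref{prop:trivkerchain}, which identifies $\dim B = \dim\mbox{Frac}(B)$ and guarantees that the primes of $B$ with trivial kernel already realize a chain of maximal length. The clean case is a chain in $B(x)$ all of whose members have trivial kernel: in the Laurent setting a kernel is determined by its restriction to $B$ (invertibility of $x$ forces $a$ into a kernel whenever $ax^k$ is), so such a chain has a single constant kernel, and Proposition \ref{prop:laurentdim}(i) then makes equality occur at most once upon restriction to $B$. The distinct restrictions form a chain of trivial-kernel primes of $B$, of length $\le \dim B$, so the chain itself has length $\le \dim B + 1$. Over a semifield $F$ this already closes the Laurent statement, since by Proposition \ref{prop:smfnokernel} every proper congruence of $F(x)$ has trivial kernel; this is in essence Proposition \ref{prop:laurentdim}(ii).

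The substantive work is to absorb primes with nontrivial kernels. Here I would split a chain into maximal stretches of constant kernel: Proposition \ref{prop:laurentdim}(i) controls the restriction to $B$ within each stretch, while a kernel jump forces a genuine strict step of the restricted chain in $B$ and so consumes dimension of $B$, the bottom trivial-kernel stretch being governed by the fraction-field computation above. For the polynomial ring $A[x]$ the argument runs in parallel, since Proposition \ref{prop:laurentdim}(i) and Lemma \ref{lem:fourprimes} are stated for $A[x]$ as well; the only new feature is that $x$ itself may enter a kernel, which is harmless because $A[x]/\langle(x,0)\rangle = A$ lets that portion of the chain be read off inside $A$.

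The hard part is global rather than local. Counting fibers one at a time yields only the non-Noetherian-type bound $\dim B + 1 \le \dim B[x] \le 2\dim B + 1$: Proposition \ref{prop:laurentdim}(i) permits one extra equality per constant-kernel stretch, and a priori there are as many stretches as kernel jumps. The crux is therefore to show that the single extra dimension contributed by $x$ can be collected only \emph{once} along the entire chain, not once per kernel jump — the semiring analogue of $B$ being a strong $S$-ring. This is precisely where Proposition \ref{prop:trivkerchain} is indispensable: it allows the computation to be transferred to the fraction field, where all kernels are trivial and Proposition \ref{prop:laurentdim}(i) applies to the chain as a whole, pinning the excess at $1$ and forcing $\dim B[x] = \dim B(x) = \dim B + 1$.
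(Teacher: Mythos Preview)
Your proposal assembles the right ingredients (Proposition \ref{prop: var_inc_dim}, Proposition \ref{prop:laurentdim}(i), Proposition \ref{prop:trivkerchain}) but does not put them together in a way that closes the upper bound. The explicit strategy you describe --- breaking a chain into maximal constant-kernel stretches and applying Proposition \ref{prop:laurentdim}(i) on each --- indeed gives only $\dim B(x)\le 2\dim B+1$, as you yourself note. Your final paragraph then gestures at ``transferring to the fraction field'', but this is the step that actually fails as written: if ``the fraction field'' is $\mbox{Frac}(B)$, then a prime $\mathfrak p$ of $B(x)$ extends to $\mbox{Frac}(B)(x)$ only when $\ker(\mathfrak p)\cap B=\{0\}$, so the very kernel jumps you are trying to tame prevent you from lifting the whole chain there, and Proposition \ref{prop:laurentdim}(i) cannot be applied ``to the chain as a whole''. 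No amount of bookkeeping on $B$ alone removes this obstruction.

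The move you are missing is to apply Proposition \ref{prop:trivkerchain} not to $A$ (or to $B$), but to the domain $A(x)/P_0$, where $P_0$ is the bottom of a maximal chain in $A(x)$. That proposition says the primes of $A(x)/P_0$ with trivial kernel already form a chain of maximal length; pulling back, one may replace the given maximal chain by one in which \emph{every} $P_i$ has $\ker P_i=\ker P_0$. Now the hypothesis of Proposition \ref{prop:laurentdim}(i) is satisfied by the entire chain at once, so restriction to $A$ collapses at most one inclusion, giving $\dim A(x)\le \dim A+1$ directly. The same argument works verbatim for $A[x]$. In particular your preliminary reduction to $A$ a domain, while correct, is unnecessary, and the stretch-by-stretch analysis never enters.
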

\begin{proof}
Let $P_0 \subset P_1 \dots \subset P_{\dim A(x)}$ be a chain of primes of maximal length in $A(x)$. By Proposition \ref{prop:trivkerchain} we may assume that the congruences $P_i/P_0$ have trivial kernel in $A(x)/P_0$ or equivalently that $\ker(P_0) = \ker(P_i)$ for all $0 \leq i \leq \dim A(x)$. Now it follows from (i) of Proposition \ref{prop:laurentdim} that after restricting the chain to $A$, in $P_0|_A \subseteq P_1|_A \subseteq \dots$ equality occurs at most once proving that $\dim A +1 \geq \dim A(x)$. Finally by Proposition \ref{prop: var_inc_dim} we also have that $\dim A+1 \leq \dim A(x)$, proving that $\dim A(x) = \dim A + 1$. The equality $\dim A[x] = \dim A + 1$ can be verified by the same argument.  
\end{proof}

\bibliographystyle{alpha}

\begin{thebibliography}{10}

    \bibitem[CC13]{CC13}
	A. Connes and C. Consani,
	\emph{Projective geometry in characteristic one and the epicyclic category}, Nagoya Mathematical Journal 217 (2015), 95-132.
    
    
    \bibitem[Ei95]{Ei95}
    D. Eisenbud,
    \emph{Commutative algebra: with a view toward algebraic geometry}, Graduate Texts in Mathematics (1995), Springer-Verlag, volume 150.

    \bibitem[Go99]{Go99}
    J. S.  Golan,
    \emph{Semirings  and  Their  Applications}, Kluwer,  Dordrecht (1999)
    
    
    \bibitem[JM14]{JM14}
    D. Jo\'o and K. Mincheva,
    \emph{Prime congruences of idempotent semirings and a Nullstellensatz for tropical polynomials}, arXiv:1408.3817
    

    \bibitem[PR15]{PR15} 
    T. Perri and L. Rowen,
    \emph{Kernels in tropical geometry and a Jordan-H\"{o}lder Theorem}, arXiv:1405.0115
    
    \bibitem[Se54]{Se54}
    A. Seidenberg,
    \emph{On the dimension theory of rings. II.}, Pacific J. Math., Volume 4, Number 4 (1954), 603-614.
  


\end{thebibliography}

\end{document}